\documentclass[12pt, oneside,reqno]{amsart} 
\usepackage[utf8]{inputenc}
\usepackage{enumitem}
\usepackage{amsfonts}
\usepackage{amsmath,amsthm}
\usepackage{comment}
\usepackage{listings}
\usepackage{graphics}
\usepackage{bm}
\usepackage[inline]{asymptote}
\usepackage{float}

\usepackage{geometry}
\usepackage{graphicx,xcolor}
\graphicspath{{./figures/}}
\usepackage{hyperref,url}
\definecolor{darkblue}{rgb}{0,0,0.75}
\definecolor{darkred}{rgb}{0.75,0,0}
\definecolor{darkgreen}{rgb}{0,0.75,0}   
\hypersetup{colorlinks,
  filecolor=black,
  linkcolor=darkblue,
  citecolor=darkgreen,
  urlcolor=darkred,
  bookmarksopen=true}
\usepackage{amsthm}
\usepackage{graphicx}
\usepackage{amssymb}
\usepackage[section]{placeins}
\usepackage{cite}
\usepackage{appendix}
\usepackage{indentfirst}
\usepackage{extarrows}
\usepackage{tikz}

\numberwithin{equation}{section}
\newtheorem*{question}{Question}

\newtheorem{thm}{Theorem}[section]
\newtheorem{lem}[thm]{Lemma}
\newtheorem{prop}[thm]{Proposition}
\newtheorem{cor}[thm]{Corollary}
\theoremstyle{definition}
\newtheorem{defi}[thm]{Definition}
\newtheorem{rmk}[thm]{Remark}

\newtheorem{defn}[thm]{Definition}

\newcommand{\R}{\mathbb{R}}

\swapnumbers

\numberwithin{equation}{section}

\title[Noncompact CSF with convex projections]{On Asymptotically Conical Curve Shortening Flows with Convex Projections}
\author{Alexander Mramor}
\address{Department of Mathematics, University of Oklahoma, Norman, OK 73019, USA}
\date{}
\email{amramor@ou.edu}

\begin{document}

\begin{abstract} In this work we study the long term behavior of the curve shortening flow of asymptotically conical curves with convex projections in $\R^3$.
\end{abstract}

\maketitle

\section{Introduction}

The mean curvature flow and particularly the curve shortening flow are well understood in codimension one but significantly less is known in higher codimension, for one because the comparison principle fails to hold in general. With this in mind, conditions that aid in constraining the potential pathologies that occur like graphicality, symmetry, curvature pinching, or entropy bounds are desirable; as an incomplete but fairly broad list of past works consider \cite{AltschulerGrayson, Wang2002_GraphicMCF,  Wang2004MCF, Wang2001Mean, Andrews2010Mean, lynch2020highcodimensionmeancurvature, Naff2022, naff2022singularity,  vogiatzi2023singularity, Smoczyk2004_longtime, Colding2020Complexity, litzinger2023singularities, nguyen2026highcodimensioncurveshortening }.
$\medskip$

One interesting condition that seems to be newer and less studied, broadly fitting into the graphicality and pinching categories above, is the one-to-one convex projection condition. We phrase it here and throughout in terms of projection onto the $xy$-plane for concreteness; in the following, we let $P_{xy}:\mathbb{R}^n=\mathbb{R}^2\times\mathbb R^{n-2}\rightarrow\mathbb{R}^2$ be the orthogonal projection onto this plane and for a space curve $\gamma$, we let $P_{xy}|_\gamma:\gamma\rightarrow xy$-plane be its restriction to $\gamma$: 
\begin{defi}\label{conproj}
We say that a smooth curve $\gamma\subset\mathbb R^n$ has \emph{a one-to-one convex projection} onto the $xy$-plane if $P_{xy}|_\gamma$ is injective and the projection curve $P_{xy}(\gamma)$ is convex and boundaryless. 
\end{defi}
It turns out that this condition is preserved under the curve shortening flow/CSF (sometimes also refered to as sCSF to emphasize codim$(\gamma)>1$) by the Sturmian principle, and in an exciting recent work Qi Sun \cite{QSun_2025} showed the following generalization of Grayson's theorem \cite{Grayson1987}: 
\begin{thm}[Theorem 1.4 in \cite{QSun_2025}]
    If an embedded space curve $S^1 \simeq \gamma_0 \subset \R^N$ has a one-to-one convex projection onto the $xy$-plane, then its curve shortening flow $\gamma_t$ exists on a time interval $[0, T)$, $T < \infty$, and becomes asymptotically circular as $t \to T$. 
\end{thm}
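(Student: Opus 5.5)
The plan is to treat the curve as a graph over its convex projection and reduce to a planar-type problem, in three steps: preservation of the projection condition, a finite singular time at which the projection collapses, and a blow-up that must be a planar circle.

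\textbf{Step 1: the projection condition is preserved.} Write $\gamma_t$ as a graph over its projection. Since $P_{xy}|_{\gamma_t}$ is injective, $\gamma_t=\{(p,u(p,t)) : p\in \sigma_t\}$ with $\sigma_t=P_{xy}(\gamma_t)$ and $u:\sigma_t\to\R^{N-2}$. Injectivity and convexity of $\sigma_t$ are open conditions. To see they persist, apply the Sturmian (zero-counting) principle to quantities that satisfy linear parabolic equations along the flow:
\begin{itemize}
\item For each unit direction $e$ in the $xy$-plane, the function $\langle \gamma_t, e\rangle - c$ is linear parabolic. Its zeros on $S^1$ number at most two initially, since $\sigma_0$ is a convex closed curve. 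By the Sturmian principle the count never increases, and this yields convexity of $\sigma_t$.
\item The same argument applied to $\langle \partial_s\gamma, e\rangle$ shows that the planar tangent never vanishes. Hence $P_{xy}$ stays an immersion, and together with the two-zero property, injective.
\end{itemize}
I take this preservation as the starting point, as the paper indicates.

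\textbf{Step 2: finite time and collapse of the projection.} Finiteness of $T$ follows by comparing with a large enclosing sphere $\partial B_R$ in $\R^N$. Spheres shrink under MCF, and the squared distance to the center satisfies $(\partial_t-\Delta)|x|^2=-2$ along the flow, so $T\le R^2/2$. Next I show the singularity must occur with $\sigma_t$ shrinking. The area $A(t)$ enclosed by $\sigma_t$ has derivative $-\int_{\sigma_t} \langle \vec k_\gamma, \nu\rangle\,ds_\gamma$, where $\nu$ is the planar normal. The graph structure lets one bound this from below by a positive multiple of $2\pi$. The aim is to show that as long as the planar curvature of $\sigma_t$ and the gradient $|\nabla u|$ stay bounded, the flow can be continued smoothly, by a higher-codimension analogue of the Ecker–Huisken interior estimates for graphs. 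The key quantity is the slope $v=\sqrt{1+|\nabla u|^2}$, or more precisely the angle between the tangent of $\gamma$ and the $xy$-plane. I would try to show that $\langle T, e\rangle$-type quantities (the inverse slope) satisfy a maximum principle, giving a gradient bound that degenerates only as $\sigma_t$ shrinks.

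\textbf{Step 3: blow-up and classification.} This is the main obstacle. Rescale at the singular time. Because curves have bounded entropy near a point, Huisken's monotonicity applies and tangent flows are self-shrinkers, which in $\R^N$ are the Abresch–Langer type curves lying in planes. Convexity and injectivity of the projection pass to the limit in the weak sense, since Sturmian zero counts are lower semicontinuous. This rules out multiply-covered and non-convex shrinkers, as well as lines of multiplicity two, leaving the round circle. The remaining hard point is excluding type II blow-ups, such as a Grim Reaper or a space-curve translator. For this I would adapt Huisken's distance comparison, or Andrews–Bryan's two-point function, to the projection: because $\sigma_t$ is convex and $u$ has controlled gradient, the chord–arc ratio of $\gamma_t$ should be bounded below, and that precludes translators. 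I would then conclude, as in Grayson's theorem, that the rescalings converge smoothly to a circle, so the flow becomes asymptotically circular.
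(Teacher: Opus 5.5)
The paper does not prove this statement; it quotes it from \cite{QSun_2025}. Your proposal therefore has to stand on its own, and its decisive step has a genuine gap.

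In Step 3 you claim that convexity and injectivity of the projection pass to the blow-up limit and thereby exclude a multiplicity-two line. They do not. Long thin convex curves with injective projections converge to a doubled line. A doubled line is also compatible with every bound of the form ``each line of the $xy$-plane is crossed at most twice,'' so semicontinuity of Sturmian counts gives nothing here. In this setting the multiplicity-two line is exactly what a type II (collapsing) singularity looks like at the tangent-flow scale. So Step 3 is also internally inconsistent: you first exclude it and then list type II blow-ups as the remaining problem.

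More fundamentally, take a grim reaper lying in any 2-plane on which $P_{xy}$ is injective. It has a one-to-one convex projection, since its image under an affine isomorphism is convex, and it has bounded slope. Hence no property that merely survives passage to the limit (convexity, injectivity, slope bounds, zero counts) can rule it out. What is needed is a quantitative estimate on the evolving closed curve that must degenerate along a type II rescaling.

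Your candidate for that estimate is not established. A lower chord--arc bound does not follow from convexity of $\sigma_t$ plus a gradient bound on $u$. A thin ellipse with semi-axes $a\gg b$ is convex, yet its chord--arc ratio is about $b/a$ at the ends of the minor axis. Any such bound must therefore come from a monotone quantity, and Huisken's distance comparison does not transfer in either form you suggest:
\begin{itemize}
\item For $\gamma_t\subset\R^N$, the first-order conditions at a minimizing pair no longer pin down the tangents relative to the chord, since they can rotate about it. The maximum-principle computation then does not close.
\item For $\sigma_t$, the curve is not moving by curve shortening flow. Its normal speed is $(x_s^2+y_s^2)\tilde\kappa$ (the identity from \cite{QSun_2024} used in Lemma \ref{reaper_in_plane}), and derivatives of this weight enter the two-point computation uncontrolled.
\end{itemize}
Ruling out this collapse is exactly what the cited theorem has to do, so your proposal leaves its core unproved.

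A secondary problem is in Step 2: the slope bound cannot come from a maximum principle on inverse-slope quantities. With $g=\langle T,e_3\rangle$ in $\R^3$ one has $(\partial_t-\partial_s^2)g^2=2|\vec k|^2g^2-2\langle\vec k,e_3\rangle^2$. At a steepest point the last term vanishes, so the reaction term has the wrong sign. The uniform slope bound comes instead from Sturmian counts of intersections with steep planes $z=ax+by+c$, as in \cite{QSun_2024}; the paper invokes this for Corollary \ref{stayconvex+ramp}.
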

See also the pioneering work of H{\"a}ttenschweiler \cite{Httenschweiler2015CurveSF} and Minar\v{c}\'{i}k and Bene\v{s} \cite{Minarcik2020LongTerm} on the convex projection condition. Inspired by this result and Polden's work \cite{Polden1991EvolvingCurves} on noncompact curve shortening flows in $\R^2$, in this article we consider the long term behavior of noncompact CSF with convex projections. It will also be desirable to impose some control on the spatial asymptotics of our space curves, which is the content of the following:
\begin{defi}
 Letting $L_1,L_2$ be two distinct non-vertical half-lines/rays, we denote by $\theta$ the angle between the rays $L_1,L_2$. We will say a (connected) space curve $\gamma$ is \textit{asymptotically conical} if its is $C^\infty$ asymptotic to two such rays $L_1, L_2$. 
\end{defi}
The definition of asymptotically conical we give above is perhaps somewhat nonstandard when the angle between $L_1$ and $L_2$ is zero, where is this case the asymptotic cone would be a ray with multiplicity two. The asymptotic conical condition is natural in many contexts in the mean curvature flow and gives that the behavior of the flow will be well controlled at spatial infinity. Our main result then is the following:
\begin{thm}\label{mainthm} Suppose the connected, smoothly embedded space curve $\gamma_0:\R\rightarrow\R^n$ is $C^\infty$-asymptotic to rays $L_1,L_2$ in the $xy$-plane and that $\gamma_0$ has a one-to-one convex projection onto the $xy$-plane. Then we have:
\begin{enumerate} 
\item There exists a solution $\gamma_t$ to the CSF out of $\gamma_0$ defined on the time interval $[0, \infty)$ and is unique amongst such immortal flows. Furthermore, $\gamma_t$ will continue to be a curve with convex projections asymptotic to $L_1$ and $L_2$.
\item There are sequences $t_i \to \infty$ for which in the $C_{loc}^\infty$ topology:
\begin{enumerate}
\item If $\theta = \pi$, $\gamma(\cdot,t_i)$ converges to a line whose link in $S^2$ agrees with the union of the links of $L_1$ and $L_2$. 
\item If $\theta\in(0,\pi)$, the rescaled CSF $\frac{\gamma(\cdot,t_i)}{\sqrt{2t+1}}$ converges to an expander of opening angle $\theta$. 
\item If $\theta = 0$, then after appropriate recenterings $\gamma(\cdot, t_i)$ will converge to a grim reaper.
\end{enumerate}
\end{enumerate}
\end{thm}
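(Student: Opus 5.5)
The plan is to reduce as much as possible to the planar flow of the projection, using the one-to-one convex projection to write $\gamma_t$ as a graph over the planar convex curve $P_{xy}(\gamma_t)$. Since $\gamma_t$ is a graph over its projection, the vertical components $u=(x_3,\dots,x_n)$ can be viewed as functions on the projected curve. The rays $L_1,L_2$ lie in the $xy$-plane, so $u\to 0$ at both ends.

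\textbf{Step 1: long time existence and uniqueness.} Approximate $\gamma_0$ by compact closed curves $\gamma_0^R$ with one-to-one convex projections. To build them, truncate $\gamma_0$ outside $B_R$, close it up far out by a large convex arc lying in the $xy$-plane, and smooth. By \cite{QSun_2025} each approximating flow exists until it becomes round, with extinction time $T_R\gtrsim R^2\to\infty$. Local interior estimates for CSF then give uniform $C^\infty_{loc}$ bounds on compact time intervals, and the conical asymptotics give a uniform bound on $|k|$ at spatial infinity. These bounds follow from pseudolocality-type estimates, since near infinity the curve is a small $C^1$ graph over a ray. Passing to the limit produces an immortal flow. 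The convex one-to-one projection persists by the Sturmian principle, and asymptotics to $L_1,L_2$ persist by the same local graphical estimates. For uniqueness among immortal flows with these asymptotics, the difference of two solutions, written as graphs over a common background, satisfies a linear parabolic system with bounded coefficients and decays at infinity. A weighted energy/maximum principle argument in the spirit of Polden \cite{Polden1991EvolvingCurves} then gives uniqueness.

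\textbf{Step 2: height decay.} Each component $u^\alpha$ ($\alpha\ge 3$) of the position vector satisfies the heat equation $\partial_t u^\alpha=\partial_s^2 u^\alpha$ along the flow. The maximum principle for $\max|u^\alpha|$ on the noncompact curve applies, using the decay at infinity. I want more than boundedness, namely decay relative to the scale of the flow. Comparing $|u|^2$, which is a subsolution, with the barrier behaviour of solutions to the heat equation on a curve with length growth $\ge$ linear should show $\sup|u|$ is nonincreasing and that the rescaled heights $|u|/\sqrt{2t+1}\to 0$. Consequently any blow-down limit lies in the plane, and the planar projection asymptotically satisfies planar CSF up to errors controlled by $|\partial_s u|$.

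\textbf{Step 3: identification of limits.} I would use the three cases of Theorem \ref{mainthm}.
\begin{itemize}
\item If $\theta\in(0,\pi)$, use Ecker--Huisken type monotonicity for the rescaled flow $\tilde\gamma=\gamma/\sqrt{2t+1}$ with expander weight $e^{|x|^2/4}$. Together with the uniform curvature bounds $|k|\le C/\sqrt{2t+1}$ from Step 1 and $u/\sqrt{2t+1}\to 0$, this yields subsequential convergence to a planar expander asymptotic to the cone $L_1\cup L_2$. Planar expanders with given opening angle are unique, which identifies the limit.
\item If $\theta=\pi$, the same argument applies, the expander being a line. Alternatively, the curvature of the convex projection decays by a Polden-type argument, so unrescaled subsequences converge locally to a straight line with the correct link.
\item If $\theta=0$, both ends go off in the same direction. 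Here the flow has a translating-type behaviour, since the enclosed "tip" recedes. Recentre at the point of maximal curvature of the projection. A Hamilton Harnack inequality for the planar convex projection, or the Sturmian control of the vertical part combined with Step 2, gives convergence to an eternal convex planar solution with curvature maximised at the origin. By Hamilton's classification this is a translator, so the limit is a grim reaper.
\end{itemize}

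The main obstacle I expect is the $\theta=0$ case, together with the coupling in Step 2. The projection is not itself a planar CSF, so Harnack and classification results must be applied to a perturbed equation. Controlling $\partial_s u$ well enough that the error vanishes in the limit, uniformly after recentering at points moving to infinity, is the delicate part. I would first try to show $|\partial_s u|\to 0$ via the evolution of $|\partial_s u|^2$ and the maximum principle, using the Sturmian bound on the number of zeros of each $u^\alpha$.
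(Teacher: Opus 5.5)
Your proposal has a genuine gap: it never produces curvature bounds on the core of the curve.

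\textbf{Step 1 and the case $\theta>0$.} In Step 1 you assert that local interior estimates give uniform $C^\infty_{loc}$ bounds for the compact approximants on arbitrary compact time intervals. For space curves there are no such estimates. Wang's Lipschitz estimates and pseudolocality give uniform bounds only for a short time, or on the nearly flat ends. Sun's theorem makes each approximant smooth up to its extinction time, but with no bound uniform in $R$. Ruling out finite-time singularities of the noncompact flow is exactly what has to be proved, so Step 1 assumes its conclusion. The same unjustified input reappears in Step 3, where you take $|k|\le C/\sqrt{2t+1}$ ``from Step 1''; Step 1 gives nothing that decays in $t$.

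The idea you are missing for $\theta>0$ is that $\gamma_t$ is a ramp. The one-to-one convex projection, the Sturmian slope bound and the conical asymptotics together give $\langle T,V\rangle\ge c>0$ for a fixed $V$ in the $xy$-plane. The maximum principle for $\kappa/\langle T,V\rangle$, whose maximum is attained thanks to the asymptotics, then gives a uniform curvature bound and $T=\infty$. In the rescaled picture the other coordinates are graphs over the $V$-axis with bounded gradient. They therefore satisfy a uniformly parabolic system, and De Giorgi--Nash--Moser plus Schauder estimates give the local rescaled curvature bounds needed to extract limits.

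\textbf{Long-time existence for $\theta=0$.} Here a blow-up argument is needed instead. A tangent flow at a first singular time is planar (Altschuler). Convexity of the projection forces it to be a multiplicity-two line, which is incompatible with the projection being convex and asymptotic to two distinct parallel lines.

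\textbf{Step 2.} Your observation that the vertical coordinates solve the heat equation, so that $\sup|u|$ is nonincreasing, is correct. It is a cleaner route to planarity of blow-down limits than the paper's pancake barriers. However, it says nothing about $u$ at unit scale.

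\textbf{The limit for $\theta=0$.} Your plan here fails for more than the reason you flag. Without curvature bounds you cannot take smooth limits of the recentred flows. Hamilton's Harnack inequality and his classification of eternal solutions apply to planar CSF, which the projection is not. Recentring at the spatial maximum of projected curvature does not make the space-time maximum attained in the limit. Most seriously, nothing prevents the tip from thinning out, so the limit could degenerate to a half-line, or the curvature at the recentring point could blow up.

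The paper avoids a priori curvature bounds by working with Brakke flows, in the following steps.
\begin{enumerate}
\item It recentres at a minimiser of $d_Q$, the height above a transverse plane $Q$.
\item It uses $\tilde\kappa< s\,\gamma''\cdot\vec u$ to compare the projection with speed-scaled planar grim reapers, which gives $m_t\ge ct+m_0$.
\item Combining this with squeezing by cylinders over expanders, it shows the Brakke limit lies in the $xy$-plane. This is the unit-scale planarity your Step 2 cannot give.
\item It chooses $t_i$ using competing thin-and-fast and wide-and-slow grim-reaper barriers, so that the limiting convex set $D_t$ has nonempty interior.
\item It excludes singularities of the limit, since a tangent flow would be a multiplicity-two line, contradicting nondegenerate convexity.
\item It upgrades to smooth convergence via multiplicity one and Brakke regularity, and concludes with Polden or Choi--Choi--Daskalopoulos.
\end{enumerate}
Some substitute for weak limits, unit-scale planarity, nondegeneracy and regularity of the limit is needed before any classification result can be applied.

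\textbf{Two smaller points.} First, for $\theta=\pi$ the convex projection has zero total turning, so it is a line $\ell$. What needs control is then the vertical graph over $\ell$, to which the paper applies Ecker--Huisken to get unrescaled convergence to a line. Your ``same argument'' gives only rescaled convergence, which is weaker than 2(a), and your Polden-type alternative controls the projection, which is already straight. Second, $T_R\gtrsim R^2$ fails for $\theta=0$, where the projected area is $O(R)$. Only $T_R\to\infty$ is needed, and it follows because the projected area decreases at rate at most $2\pi$.
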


In fact item (2) above is true more generally for ramps and we can gain a slightly stronger conclusion -- see theorem \ref{expander_ramp_thm} below. The notion of ramp in the sense of Altschuler and Grayson \cite{AltschulerGrayson} and more is discussed in the preliminaries section, section \ref{prelim}; it's another useful notion of graphicality for curves in $\R^3$. The choice of recentering and rescalings indicated above are the most natural ones with the codimension one case in mind. 
$\medskip$

We also point out that in the statement above the notions of asymptotically conical and convex projection are ``coupled" in that the asymptotic rays lay in the same plane we project down to -- it seems natural to make some sort of assumption along these lines, although perhaps it can be weakened to the assumption that $L_1$ and $L_2$ are not perpendicular to the $xy$-plane. Combining with the work \cite{QSun_2025} we have the following updated picture of the curve shortening flow with convex projections in $\R^3$: 
\begin{cor} Suppose $\gamma \subset \R^3$ is an embedded asymptotically conical curve with convex projections in the coupled sense above, where $\gamma$ is asymptotically conical possibly to the empty cone. Then there are sequences of times $t_i \to \infty$ such that $\gamma_{t_i}$ converges either to a round point, a line, or after appropriate rescalings/recenterings an expander or grim reaper. 
\end{cor}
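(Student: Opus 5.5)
The corollary follows from a case split on the asymptotic cone of $\gamma$, with each case handled by a result already stated. The cases are: the cone is empty, or it consists of two rays $L_1,L_2$ lying in the $xy$-plane.

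\emph{Empty cone.} Here $\gamma$ is compact, since a connected embedded curve with no asymptotic rays is a closed curve $S^1\simeq\gamma\subset\R^3$. It has a one-to-one convex projection onto the $xy$-plane, so Theorem 1.4 of \cite{QSun_2025} applies. The flow exists on a finite interval $[0,T)$ and becomes asymptotically circular as $t\to T$. Under the usual parabolic rescaling about the limit point this is exactly convergence to a round point. The statement's ``$t_i\to\infty$'' should be read as $t_i\to T$ in this case, or after reparametrizing time by the rescaled-flow time $-\tfrac12\log(T-t)\to\infty$. I will note this explicitly, since it is the only place the wording has to be adjusted.

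\emph{Nonempty cone.} Now $\gamma$ is noncompact and $C^\infty$-asymptotic to two rays $L_1,L_2$. The coupling assumption means these rays lie in the $xy$-plane, so they are non-vertical, and the projection is one-to-one and convex. These are precisely the hypotheses of Theorem \ref{mainthm}. Part (1) gives an immortal flow that preserves convex projections and the asymptotic rays. Part (2) then supplies sequences $t_i\to\infty$ along which, according to the angle $\theta\in[0,\pi]$ between the rays:
\begin{itemize}
\item if $\theta=\pi$, the flow converges to a line;
\item if $\theta\in(0,\pi)$, the rescaled flow converges to an expander;
\item if $\theta=0$, the recentered flow converges to a grim reaper.
\end{itemize}

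To make these cases exhaustive, I will check that $\theta$ always lies in $[0,\pi]$. This holds because $\theta$ is the angle between two rays, and the projection of $\gamma$ is a convex planar curve whose two ends are asymptotic to $P_{xy}(L_i)=L_i$. The case $\theta=\pi$ is allowed by convexity: it is realized, for instance, by a line, or by a convex curve with parallel opposite asymptotes that meet in the limit.

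There is no substantial obstacle here; all the analytic content sits in Theorem \ref{mainthm} and \cite{QSun_2025}. The one point needing care is bookkeeping in the compact case, namely interpreting ``round point'' as the blow-up limit at the finite extinction time rather than as a limit at $t\to\infty$.
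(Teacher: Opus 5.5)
Your proposal is correct and is essentially how the paper obtains the corollary: it gives no separate proof, and the corollary follows by combining Theorem 1.4 of \cite{QSun_2025} for the compact (empty-cone) case with Theorem \ref{mainthm} for the two-ray case; your note that in the compact case one must read $t_i \to T$ (or pass to rescaled time) is a fair correction of the statement's wording. One harmless slip: when $\theta=\pi$, convexity forces $P_{xy}(\gamma)$ to be a line (as the paper observes at the start of Section \ref{line}), so no convex example other than the line exists, but this plays no role in the argument.
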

The statement above gives a qualitative picture of the potential phenomena in this setting much in line with Grayson's and Polden's results in the codimension one case, although there are still many interesting questions to consider. The most important questions/issues to address seem to be:
\begin{itemize}
    \item  In Theorem \ref{mainthm}, particularly for case 2(c), the sequences $t_i$ are judiciously picked and it would be interesting to know if the conclusions were true along any sequence of times $t_i \to \infty$. Imaginably there could still be ``bad" limits which don't fit into the list above. 
    \item In the argument for 2(c) we show that in the recentered limit one finds nontrivial grim reaper, although we don't show it itself is asymptotic to $L_1$ and $L_2$ and apriori it could be strictly narrower; it seems unlikely that this is the case however. See also remark \ref{widthrmk} below.
\end{itemize}

We conclude with a few words about the argument. In a nutshell, after establishing the long time existence of the flow we proceed to show that the flow becomes asymptotically planar, which is either automatically the case as in 2(a) above or by barrier arguments in the latter ones. After ensuring that one may take a limit of these flows we then may employ classification results in the codimension one case to gain our result. Case 2(c) is relatively harder to handle than the others because of the apparent lack of good a priori estimates along the flow; in this case we take limits of $\gamma_t$ as Brakke flows, and then argue that the limiting object is smooth using a geometric argument via the convex projection condition, at least along a certain choice of recentered flows. Afterwards its standard to verify the convergence is in fact smooth after potentially passing to a further subsequence using Brakke regularity theorem. 
$\medskip$

\textbf{Acknowledgments:} The author heartily thanks Qi Sun for his many valuable inputs on this work.
$\medskip$

\textbf{Statement on AI:} AI wasn't used to come up with the statements, arguments, or prose of this note, although it was used for proofreading, to double check some calculations, and to review the literature.

\section{Preliminaries}\label{prelim}

In this section we give a brief review of the mean curvature flow, including some facts which will be particularly important in the sequel. We start with defining the flow: 

    \begin{defn} 
        A (smooth) mean curvature flow (MCF) of embedded submanifolds in $\mathbb{R}^{m+n}$ is given by a manifold $M^n$ and a family $F: M \times I \to \mathbb{R}^{m}$ of embeddings satisfying
        \begin{equation}
            \frac{\partial F}{\partial t}\left(x,t\right) = \vec{H}
        \end{equation}
        where $I \subseteq \mathbb{R}$ is some nonempty interval and $\vec{H}$ is the mean curvature vector of the embedding at $(x,t)$. 
    \end{defn}
   When $m$, the dimension of $M$, is equal to $1$ the MCF is referred to as the \textbf{curve shortening flow}, abbreviated \textbf{CSF}. The flow is much less well understood when the codimension, $n$, is greater than one; one compelling geometric reason for this is because the avoidance principle fails to hold in high codimension as the figure below illustrates:
   \begin{figure}[H]
\hspace{1.5cm}\includegraphics[scale=.3]{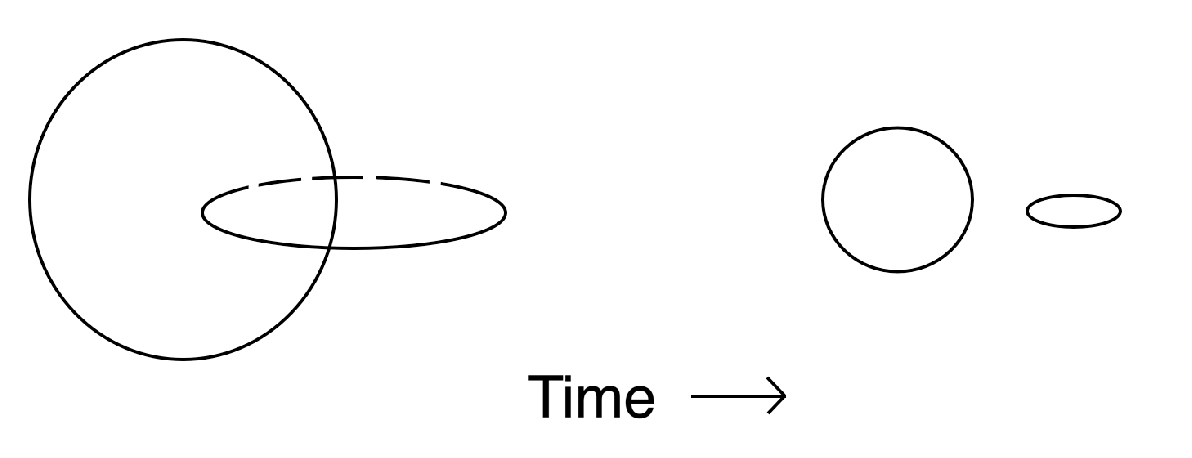}
\newline
\caption{Illustration of the evolution of two linked round circles laying in separate planes. They both shrink homothetically under the flow converging as sets to their centers, and before their flows become unlinked they must intersect each other.}
\end{figure}
However, it happens to be the case that the CSF will stay disjoint from the flow of convex domains, so that these can be used as barriers:

\begin{lem}\label{convbarrier} Let $\Gamma_t$ be a curve shortening flow and $M^2_t$ a convex solution to mean curvature flow so that $\Gamma$ is initially disjoint from the convex region $M$ bounds and which in later times, $t \in [0, T)$, $T \leq \infty$, might only intersect in a bounded region. Then, in fact, they remain disjoint for $t \in [0,T)$. 
\end{lem}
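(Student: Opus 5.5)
The plan is to argue by a first-touching-time argument using a distance function, showing that the distance from the curve to the convex region cannot decrease to zero. Let $K_t$ be the closed convex region bounded by $M_t$, and let $\Gamma_t$ be parametrized by $\gamma(\cdot,t)$. Consider
\[
u(p,t) = d(\gamma(p,t), K_t),
\]
the distance from the curve point to $K_t$. Suppose the flows first touch at a time $t_0\in(0,T)$. By hypothesis any intersection occurs only in a bounded region. Combined with the initial disjointness and a little care (for instance working with the infimum of $u$ over a large compact set, or first shrinking $M$ slightly and using continuity), this gives a first time $t_0$ and a point $p_0$ where $u(p_0,t_0)=0$, while $u>0$ before. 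To keep the computation at a positive distance, I would instead track $\min u$ and show it is nondecreasing while positive. Noncompactness is handled by the bounded-intersection assumption: near the touching time, the minimum of $u$ over the relevant region is attained at interior points away from spatial infinity.

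The key computation happens at a point where $u(\cdot,t)$ attains a small positive local minimum. Let $x=\gamma(p,t)$ and let $y$ be its nearest point on $M_t$, so that $\nu=(x-y)/|x-y|$ is the outward normal to $M_t$ at $y$. The function $f(z)=d(z,K_t)$ is convex on $\R^m$ and smooth outside $K_t$, with $\nabla f=\nu$. Along the curve,
\[
\partial_s^2 u = \langle D^2 f\,\gamma_s,\gamma_s\rangle + \langle \nu, \vec{\kappa}\rangle,
\]
and $D^2 f\ge 0$ by convexity, so $\langle \nu,\vec\kappa\rangle \ge \partial_s^2 u - \langle D^2f\gamma_s,\gamma_s\rangle$. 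For the time derivative, the curve moves by $\vec\kappa$ and the set $K_t$ moves inward with normal speed $H_M\ge 0$ at $y$, which follows from convexity and the mean curvature flow. Hence
\[
\partial_t u \ge \langle \nu,\vec\kappa\rangle + H_M(y) = \partial_s^2 u - \langle D^2 f\gamma_s,\gamma_s\rangle + H_M(y).
\]

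The main obstacle is the Hessian term $\langle D^2f\gamma_s,\gamma_s\rangle$. It is nonnegative, so it enters with the wrong sign, and it must be absorbed by $H_M$. At distance $u$ from $K_t$, the eigenvalues of $D^2 f$ are $\kappa_i/(1+u\kappa_i)$ in the directions tangent to the parallel surface, where $\kappa_i\ge 0$ are the principal curvatures of $M_t$ at $y$, and $0$ in the $\nu$ direction. Since $\gamma_s$ is a unit vector,
\[
\langle D^2f\gamma_s,\gamma_s\rangle \le \max_i \frac{\kappa_i}{1+u\kappa_i} \le \max_i \kappa_i \le \sum_i \kappa_i = H_M ,
\]
where the last inequality uses convexity, i.e. all $\kappa_i\ge0$. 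This is exactly where convexity and the codimension-one mean curvature of $M$ are used. At a spatial minimum we also have $\partial_s^2u\ge 0$, so $\partial_t u\ge 0$ in the viscosity/Hamilton-trick sense. Therefore $\min u$ is nondecreasing and cannot reach $0$, which contradicts touching and gives disjointness on $[0,T)$.

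To make the argument rigorous, two further points need care. First, $M_t$ may be only weakly convex, or nonsmooth where the nearest point is not unique; I would handle this by noting that $f$ is semiconcave-free, being convex, and by using Hamilton's maximum principle for the Lipschitz function $\min u$. Second, the bounded-region hypothesis is what guarantees that this minimum is attained.
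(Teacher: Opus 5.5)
Your argument is correct. Note, though, that the paper does not prove this lemma itself; it only cites Lemma 2.3 of \cite{BourniLangfordLynch+2023+273+305}. What you give is a self-contained maximum-principle proof. With $u=d(\gamma,K_t)$ you get the exact identity $(\partial_t-\partial_s^2)u=H_M(y)-\langle D^2f\,\gamma_s,\gamma_s\rangle$. Weak convexity then gives $\langle D^2f\,\gamma_s,\gamma_s\rangle\le\max_i\kappa_i\le H_M$. This is precisely why a convex hypersurface can act as a barrier for a flow of higher codimension. The same computation, using the sum of the $k$ largest eigenvalues of $D^2f$, handles $k$-dimensional submanifolds.

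Compared with the citation, your route makes explicit where convexity and the bounded-intersection hypothesis enter. Two points should be tightened.

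First, for noncompact flows the global infimum of $u$ can be zero without any contact. This happens whenever $\Gamma_t$ and $M_t$ are asymptotic to the same rays, as with the cylinders over expanders in Lemma \ref{lim_props}. So ``$\min u$ is nondecreasing'' is not meaningful globally and must be localized. Fix a ball $B'$ strictly containing the region where intersections may occur. Use compactness to get $u\ge c>0$ on $\Gamma_t\cap\partial B'$ for all $t$ up to the putative first contact time. Then apply Hamilton's trick to the minimum of $u$ over $\Gamma_t\cap\overline{B'}$ on the time interval where it lies below $c$, since there it is attained in the interior.

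Second, your concern about nonunique nearest points is vacuous. The nearest-point projection onto a closed convex set is always unique. When $M_t$ is smooth, the map $(y,s)\mapsto y+s\nu(y)$, $s>0$, is a diffeomorphism onto $\R^3\setminus K_t$, because its Jacobian is $\prod_i(1+s\kappa_i)>0$. Hence $f$ is smooth there, and only $\kappa_i\ge0$ is used. The remark about $f$ being ``semiconcave-free'' can be dropped.
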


See lemma 2.3 in \cite{BourniLangfordLynch+2023+273+305} for a proof. As is well understood the mean curvature flow will often develop singularities, the phenomenon where the flow "pinches" at some points and time and the limiting set is no longer a smooth manifold. In most cases in this work though we will be able to avoid this situation, because in most situations the initial data we consider will in fact be \textbf{ramps}, which are a high codimension analogue of graphs for the CSF: 

\begin{defn}\label{ramp} Let $\gamma \subset \R^3$ be an embedded, arclength parameterized, curve with tangent vector $T$. Then $\gamma$ is said to be a ramp if $\langle T, V \rangle \geq 0$ for some vector $V$. 
\end{defn} 

The evolution equation for $\langle T, V \rangle$ is given by the following, where $\kappa$ is the geodesic curvature and $s$ is the arclength parameter.
\begin{equation}
\frac{\partial}{\partial t} \langle T, V \rangle = \frac{\partial^2}{\partial s^2} \langle T, V \rangle + \kappa^2  \langle T, V \rangle\,. 
\end{equation} 
To ensure that the ramp condition is preserved one would wish to use the maximum principle, but because we are considering noncompact flows throughout some additional care is required. Of course, there are noncompact maximum principles available but often in the sequel (in particular sections \ref{line})  we will have that $ \langle T, V \rangle $ is uniformly positive and stays by a barrier argument, discussed below, far away from the origin. Therefore, one may apply the maximum principle as in the compact case to see that this quantity stays positive and in particular its minimum doesn't decrease in many cases. The following evolution equation gives that lower bounds of $ \langle T, V \rangle$ along with initial upper bounds on $\kappa$ give bounds on $\kappa$ in later times.
\begin{equation} \label{ktv}
\frac{\partial}{\partial t} \frac{\kappa}{ \langle T, V \rangle} = \frac{\partial^2}{\partial s^2}\frac{\kappa}{ \langle T, V \rangle} + 2 \frac{2}{ \langle T, V \rangle} \frac{\partial}{\partial s} \langle T, V \rangle \frac{\partial}{\partial s}\frac{\kappa}{ \langle T, V \rangle} - \frac{\kappa}{ \langle T, V \rangle} \tau^2\,,
\end{equation} 
where $\tau$ here is the torsion.
As before from \eqref{ktv} it will often be the case $\frac{\kappa}{ \langle T, V \rangle}$ must be non increasing by the classical maximum principle (so without having to resort to noncompact ones). The relevance of ramps in this context is apparent with the following observation in hand:
\begin{prop}\label{isramp} If $\gamma: \R \to \R^n$ has convex projections and the angle $\theta$ between $L_1$ and $L_2$ is greater than zero, $\gamma$ is a ramp with respect to some $V \in \langle e_1, e_2 \rangle$.
\end{prop}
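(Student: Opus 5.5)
Since the ramp condition $\langle T,V\rangle\ge 0$ with $V\in\langle e_1,e_2\rangle$ only involves the horizontal part of $T$, we have $\langle T,V\rangle=\langle P_{xy}T,V\rangle$. So the plan is to reduce everything to the planar projection curve $\sigma=P_{xy}(\gamma)$ and show that its tangent direction never points against some fixed $V$. Because $P_{xy}|_\gamma$ is injective and $\sigma$ is convex, $\sigma$ is a convex, embedded, boundaryless curve in the plane, hence the boundary of a closed convex set $K$. Since $\gamma$ is asymptotic to $L_1,L_2$, which lie in the $xy$-plane, $\sigma$ is asymptotic to the same rays, so $\sigma$ is noncompact and $K$ is unbounded. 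Parametrizing $\gamma$ by arclength in the direction that runs from the $L_1$ end to the $L_2$ end, $P_{xy}T$ is a nonnegative multiple of the unit tangent $\tau(s)$ of $\sigma$. Here I would need a small remark to rule out $P_{xy}T=0$ forcing a problem; in fact vanishing is harmless, since the inequality is non-strict.

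For the planar convex curve, the tangent angle $\phi(s)$ of $\sigma$ is monotone, say nondecreasing. Its total variation is $\pi-\theta$: the tangent runs from the direction $-u_1$ (coming in along $L_1$, with $u_i$ the unit direction of $L_i$) to $u_2$ (going out along $L_2$). The angle between $-u_1$ and $u_2$, measured through the side swept by the convex curve, is $\pi-\theta$. I would check this carefully using the asymptotics: the two ends of $\sigma$ have tangents converging to $-u_1$ and $u_2$, and convexity of an unbounded convex set forces the turning to be exactly $\pi-\theta<\pi$, not $\pi-\theta+2\pi k$. Hence all tangent directions $\tau(s)$ lie in a closed arc $A$ of $S^1$ of length $\pi-\theta$.

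Since $\theta>0$, the arc $A$ has length strictly less than $\pi$, so we can take $V$ to be the unit vector at the midpoint of $A$. Every $\tau\in A$ then satisfies $\langle \tau,V\rangle\ge\cos\frac{\pi-\theta}{2}=\sin\frac\theta2>0$, and consequently $\langle T,V\rangle=|P_{xy}T|\langle\tau,V\rangle\ge0$. Concretely, $V$ is the unit vector along $u_2-u_1$, that is, the direction bisecting $-u_1$ and $u_2$, which is nonzero because $u_1\neq u_2$. When $\theta=\pi$ the arc degenerates to a point and $V=u_2=-u_1$ works.

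The main obstacle is the turning-angle bookkeeping in the second step: justifying that the tangent image is exactly the arc between $-u_1$ and $u_2$ of length $\pi-\theta$ on the correct side. This uses that $K$ is unbounded and convex, so its recession cone is the cone spanned by $u_1,u_2$ with opening $\theta$, and that $\sigma$ is $C^1$-asymptotic to the rays. A secondary point is orientation consistency between $\gamma$ and $\sigma$, which follows from injectivity of $P_{xy}|_\gamma$ and continuity.
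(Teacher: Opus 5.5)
Your argument is correct and is essentially the detailed version of what the paper only asserts as ``easy to see'' in its long-time existence proof: the tangent directions of the convex projection $P_{xy}(\gamma)$ lie in an arc of length $\pi-\theta<\pi$ from $-u_1$ to $u_2$, and $V\propto u_2-u_1$ bisects that arc, with the turning-angle bookkeeping settled by your recession-cone remark (outward normals $n$ of $K$ satisfy $\langle n,u_i\rangle\le 0$). Note that your bound $\langle T,V\rangle\ge |P_{xy}T|\sin\frac{\theta}{2}$ gives the non-strict ramp condition stated here, but the uniform bound $\langle T,V\rangle>c>0$ that the paper later invokes also needs $|P_{xy}T|$ bounded below, i.e.\ the bounded slope of Corollary~\ref{stayconvex+ramp}.
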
 

This quality makes the study of the flow of $\gamma_t$ fairly straightforward as $t \to \infty$ when the opening angle $\theta > 0$. In the case $\theta = 0$ it's not clear that $\gamma$ is a ramp and in particular it isn't clear that $\kappa$ along $\gamma_t$ will be uniformly bounded so that we cannot take smooth limits as $t \to \infty$. Instead we will take limits as Brakke flows, which we next define: 
\begin{defn}[Brakke flow]
        A (k-dimensional integral) Brakke flow is a family of Radon measures $\mu_t$ such that, on an interval $I \subseteq \mathbb{R}$:

        \begin{enumerate}
            \item For almost every $t \in I$ there exists an integral $k$-dimensional varifold $V(t)$ with $\mu_t = \mu_{V(t)}$ so that $V(t)$ has locally bounded first variation and has mean curvature vector $\vec{H}$ orthogonal to Tan$(V(t), \cdot)$ a.e. 
            \item For a bounded interval $[t_1, t_2] \subset I$ and any compact set $K \subset \mathbb{R}^{n}$,
            \begin{equation}
                \int_{t_1}^{t_2} \int_K (1 + |\vec{H}|^2) d\mu_t dt < \infty.
            \end{equation}
            \item (Brakke's inequality) If $[t_1, t_2] \subset I$ and $\phi \in C^1_c(\mathbb{R}^{n} \times [t_1, t_2])$ with $\phi \geq 0$, then
            \begin{equation}
                \int_{V(t_2)} \phi(\cdot, t_2) d\mu_{t_2}  \leq \int_{V(t_1)} \phi(\cdot, t_1) d\mu_{t_1} + \int_{t_1}^{t_2} \int_{V(t)} -\phi |\vec{H}|^2 +  \langle \nabla \phi, \vec{H} \rangle + \frac{d\phi}{dt} d\mu_t dt.
            \end{equation} 
        \end{enumerate}
    \end{defn}
     The study of Brakke flows itself is quite rich, and a comprehensive introduction to Brakke flows can be found in, say, \cite{Ton}. Most relevantly for us is \textbf{Brakke's compactness theorem}, which says that for a sequence of Brakke flows with uniform area bounds on parabolic cylinders, one may exact a subsequence which converges to another Brakke flow. This convergence will be in the sense of Radon measures for all times and as varifolds for almost all times. A second important result is \textbf{Brakke's regularity theorem}, which says that Brakke flows with density bounds sufficiently close to 1 in a backwards parabolic neighborhood will be smooth with bounded curvature in a smaller neighborhood. This will play an important role in the proof of item 2(c) of Theorem \ref{mainthm} where apriori we will only take limits in the sense of varifolds.

\section{Proof of item (1) of Theorem \ref{mainthm}}\label{exist_section}

Throughout this section we suppose that $\gamma$ is a smooth embedded curve in $\R^3$ (or just as well $\R^n$) which is asymptotic to two rays/half lines, $L_1$ and $ L_2$. Our first goal is to merely establish short time existence of the flow; towards this we recall the local Lipschitz condition of M.T. Wang \cite{Wang2004MCF}:

\begin{defi} Given any positive $K < 1$, a compact n-dimensional submanifold $\Sigma$ of $\R^{n+m}$ is said to satisfy the K local Lipschitz condition if there exists a $r_0 > 0$ such that $\Sigma\cap B(q,r_0)$ for each $q \in \Sigma$ can be written as the graph of a vector valued Lipschitz function $f_q$ over an n-dimensional aﬃne space $L_q$ through q with $\frac{1}{\sqrt{ det(I + (df_q)^T df_q)}} > K$.
\end{defi} 

Since $\gamma$ asymptotically converges to $L_1, L_2$ for any $\epsilon > 0$ there exists an $R_0(\epsilon) >> 0$ so that for any $q \in \gamma \cap B(0, R_0)^c$, $\gamma$ is locally Lipschitz in the sense above with $r_0 = 1, K =1-  \epsilon$. In particular this implies:

\begin{lem} For any $K$ close to 1, $\gamma$ satisfies the $K$ local Lipshitz condition. 
\end{lem}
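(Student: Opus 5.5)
The plan is to split $\gamma$ into a far part outside a large ball and a compact near part inside it, and to handle each piece separately. The far part is already controlled by the remark just before the statement. The near part is controlled by smoothness and a compactness argument. Then we take the worse of the two constants.

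\textbf{Far region.} Fix $K<1$ and set $\epsilon = 1-K$. By the asymptotic convergence of $\gamma$ to $L_1, L_2$ (in $C^1$, which suffices), there is $R_0(\epsilon)$ with the following property: for every $q\in\gamma\cap B(0,R_0)^c$, the set $\gamma\cap B(q,1)$ is a graph of a vector valued function $f_q$ over the tangent line $L_q=q+T_q\gamma$, with $|df_q|$ small enough that $1/\sqrt{\det(I+(df_q)^Tdf_q)} = (1+|f_q'|^2)^{-1/2}>K$.

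One point needs care here. We must check that $\gamma\cap B(q,1)$ is a \emph{single} graphical arc, i.e.\ that no distant portion of $\gamma$ re-enters the ball.
\begin{itemize}
\item If $\theta>0$, the two ends separate linearly in distance, so once $R_0$ is large they are at distance $\gg 1$ from each other. The compact middle piece lies inside $B(0,R_0-2)$, so it does not reach $B(q,1)$ either.
\item If $\theta=0$, the two ends are asymptotic to the same ray and could come within distance $1$ of each other. In that case we shrink the radius: we replace $r_0=1$ by a smaller $r_0$, below half the limiting separation of the two ends. This requires the separation to stay bounded below at infinity, which holds if the ends are distinct translates, i.e.\ asymptotic to a ray with multiplicity two but at positive distance apart.
\end{itemize}
I expect the $\theta=0$ subcase to be the delicate point. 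If the two ends become asymptotically tangent to each other, the statement needs the separation to be bounded below, and I would add this as an implicit assumption of the asymptotic model.

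\textbf{Near region.} The set $\gamma\cap\overline{B(0,R_0+1)}$ is compact, and $\gamma$ is a smooth embedded curve. So it has bounded curvature $\kappa\le C$ there and a positive embeddedness (normal injectivity) radius $\delta$ on this piece. For $r<\min(\delta,1/C)\cdot c$ with a small constant $c$, the intersection $\gamma\cap B(q,r)$ is a single arc. Writing it as a graph over $T_q\gamma$, the slope satisfies $|f_q'|\lesssim Cr$. Choosing $r$ small in terms of $C$ and $K$ makes $(1+|f_q'|^2)^{-1/2}>K$.

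\textbf{Conclusion.} Take $r_0$ to be the minimum of the far-region radius and the near-region radius. Shrinking the radius preserves the graph property with the same slope bound, so this single $r_0$ works for every $q\in\gamma$. This verifies the $K$ local Lipschitz condition for every $K$ close to $1$. Compactness of $\gamma$ is not needed in the argument, only uniformity of $r_0$.
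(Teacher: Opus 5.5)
Your proposal is correct and is essentially the paper's argument. The paper only observes that the $C^\infty$ asymptotics give the graph condition with $r_0=1$, $K=1-\epsilon$ at every $q\in\gamma\cap B(0,R_0)^c$, and leaves implicit both the compact part (your near-region step) and the choice of a uniform $r_0$.

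Your extra care when $\theta=0$ is warranted, since $r_0=1$ can fail if the parallel rays are at distance $\le 1$. The positive separation you add as an assumption is already built into the paper's setting, where $L_1,L_2$ are distinct parallel rays (normalized to distance $1$ in the treatment of item 2(c)).
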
 

As discussed in section 5 and corollary 4.1 of Wang, if $K$ is sufficiently close to 1 (depending on $m,n$) there is a lower bound on the time of existence for $\gamma$, as well as curvature (and higher derivative) bounds, in the case that $\gamma$ is \textit{compact}. To reduce to this case, we cap off $\gamma$ with a semicircular arc in $S(0,R_i)$ along a sequence $R_0 < R_i \to \infty$ to produce compact curves $\gamma^i$. Its easy to see this can be done in such a way so that each curve $\gamma^i$ is uniformly K locally Lipschitz for a value of K sufficiently close to 1 that local curvature estimates hold. 
$\medskip$

Since along the sequence $R_i \to \infty$, $\gamma^i$ converge on compacta to $\gamma$ we can run the CSF on each $\gamma^i$ for a definite positive time by the aforementioned curvature bounds. Now we can take a converging subsequence ala Arzela-Ascoli and a diagonalization argument to define a smooth flow $\gamma_t$ out of $\gamma$. Indeed by a barrier argument employing osculating spheres and lemma \ref{convbarrier} above to the approximating flows $\gamma^i_t$, the flow $\gamma_t$ will converge back to $\gamma$ as $t \to 0$ (see also proposition 2.1 in \cite{Wang2004MCF}). By the uniqueness theorem of Chen and Yin \cite{ChenYin2007_MCF_pseudolocality} it will be unique amongst smooth flows over subintervals $I_s = [0, s] \subset [0,T)$. Summing up our discussion so far: 

\begin{prop} For any smoothly embedded space curve $\gamma$ asymptotic to rays $L_1, L_2$, there exists a smooth CSF $\gamma_t$ out of $\gamma$ for some time period $[0, T)$ and it is unique among all such smooth flows over subintervals $I_s = [0,s] \subset [0,T)$.
\end{prop}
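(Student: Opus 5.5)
The plan follows the approximation scheme sketched just before the statement, made quantitative. First, since $\gamma$ is $C^\infty$ asymptotic to $L_1,L_2$, fix $\epsilon>0$ small and $R_0(\epsilon)$ so that outside $B(0,R_0)$ the curve is a small $C^2$ graph over the rays. Inside $B(0,R_0)$ the curve is a compact embedded arc, so by compactness it is also locally a Lipschitz graph over its tangent lines with constant close to $1$ on balls of some radius $r_1>0$. Taking $r_0=\min(1,r_1)$, $\gamma$ satisfies the $K$ local Lipschitz condition with $K$ as close to $1$ as we like.

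Next I construct the compact caps $\gamma^i$. For $R_i\to\infty$, replace $\gamma\cap B(0,R_i)^c$ by an arc on $S(0,R_i)$ joining the two points where $\gamma$ exits, smoothly glued near the exit points. Because the rays are distinct, the exit points are separated by distance comparable to $R_i$ unless $\theta=0$. In that case I route the cap out along a larger sphere, or along a long thin loop, so that any two far-apart pieces stay at distance $\gg r_0$. The cap has curvature $O(1/R_i)$, so the $K$ local Lipschitz condition holds uniformly in $i$ with the same $r_0$ and $K$. Wang's Corollary 4.1 then gives a uniform existence time $T_0>0$ and uniform bounds $|\nabla^k A|\le C_k t^{-k/2}$ (or uniform bounds if we also use the initial smoothness) on $[0,T_0]$.

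Arzel\`a--Ascoli and diagonalization give a subsequence converging in $C^\infty_{loc}$ on $(0,T_0]$ to a smooth CSF $\gamma_t$. To see that $\gamma_t\to\gamma$ as $t\to0$, I would use two barriers:
\begin{itemize}
\item local graphical estimates, i.e.\ Wang's Proposition 2.1, which give uniform-in-$i$ $C^0$ closeness $|\gamma^i_t-\gamma^i_0|\le C\sqrt t$ locally;
\item Lemma \ref{convbarrier} applied to shrinking spheres placed in the complement, which prevents mass from moving far.
\end{itemize}
Uniform interior higher-derivative estimates near $t=0$ then upgrade this to smooth convergence on compacta, since the initial data have locally bounded derivatives. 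Setting $T\ge T_0$ to be the maximal existence time of the smooth limit gives existence.

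Uniqueness on $I_s=[0,s]$ I would cite from Chen--Yin, whose theorem covers complete flows with bounded second fundamental form on each $I_s$. So I must check that our $\gamma_t$ has bounded curvature on $[0,s]$. This follows from the uniform estimates on $[0,T_0]$. Beyond that it is part of the definition of the class, namely smooth flows with bounded geometry on compact subintervals.

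The main obstacle I expect is the uniformity of the $K$ local Lipschitz constant for the capped curves, especially when $\theta=0$. There the two ends are asymptotically parallel and close, so a naive cap could create near self-approach that destroys the radius $r_0$. I would first try to handle this by choosing the cap radius $R_i'$ with the cap lying on a sphere where the two ends have already separated by at least $2r_0$, relative to their transverse offset.
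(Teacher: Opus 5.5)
Your proposal is correct and follows essentially the same route as the paper. Both use uniform $K$ local Lipschitz control at infinity, compact approximants capped by spherical arcs, Wang's uniform existence time and estimates, Arzel\`a--Ascoli with sphere barriers (or Wang's Proposition 2.1) to recover the initial data, and Chen--Yin for uniqueness within the bounded-curvature class.

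Two harmless slips. First, Wang's estimates scale as $|\nabla^k A|\le C_k t^{-(k+1)/2}$, not $t^{-k/2}$. Second, for $\theta=0$ the cap must make a U-turn of width comparable to the distance between the parallel rays, so its curvature is $O(1)$ rather than $O(1/R_i)$. This does no damage, since only uniformity in $i$ matters, with $r_0$ chosen smaller than that distance.
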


Next, we discuss the spatial asymptotics of the $\gamma_t$ at some finite forward time:

\begin{lem}\label{halfline_asymp} Suppose that $\gamma_t$ is a noncompact curve shortening flow defined on $I= [0, T)$, $T < \infty$ such that $\gamma_0$ is smoothly asymptotic to rays $L_1$, $L_2$. Then for any $t \in I$, $\gamma_t$ is also smoothly asymptotic to $L_1$ and $L_2$.
\end{lem}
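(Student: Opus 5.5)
We want to show that for each fixed $t<T$, the curve $\gamma_t$ remains $C^\infty$-close to $L_1\cup L_2$ outside a large ball, with the closeness improving as the ball grows. The plan is to combine three ingredients: pseudolocality/local regularity, $C^0$ barriers, and interior parabolic estimates. The key point is that far out, $\gamma_0$ is locally an almost-flat Lipschitz graph over $L_i$ at every scale $r\le 1$, with Lipschitz constant tending to zero. So the local estimates of Wang already used to build the flow (through the approximants $\gamma^i_t$) give uniform curvature and higher-derivative bounds on $\gamma_t$ outside $B(0,R_0)$ for $t\in[0,T']$, for any $T'<T$. For this we may need to iterate the short-time estimate a bounded number of times, or invoke the pseudolocality theorem of Chen--Yin \cite{ChenYin2007_MCF_pseudolocality}.

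\textbf{Step 1 ($C^0$ control).} Take a point $p$ far out on $L_1$ at distance $\rho$ from the origin. Consider the region between $\gamma_0$ and the tubular neighborhood of $L_1$ of radius $\delta(\rho)\to 0$. Complements of this neighborhood can be filled with large balls (or cylinders) disjoint from $\gamma_0$, of radius $r\gg\sqrt{T}$ and centered at distance $r+\delta$ from $L_1$ near $p$. A round sphere of radius $r$ persists under MCF for time $r^2/2\gg T$ and shrinks by only $O(T/r)$. Lemma \ref{convbarrier} then shows that $\gamma_t$ avoids these shrunken balls for $t<T$. Choosing $r=r(\rho)\to\infty$ slowly, we get that $\gamma_t\cap B(p,r/2)$ lies within distance $\delta(\rho)+O(T/r)\to0$ of $L_1$. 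Two further facts are needed here: the far part of $\gamma_t$ comes from the far part of $\gamma_0$ (displacement is bounded, since $|\partial_t\gamma|=\kappa$ is bounded on $[0,T']$), and no new pieces arrive from elsewhere, which is again handled by the ball barriers.

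\textbf{Step 2 (higher derivatives).} Using the uniform Lipschitz-graph control, the Ecker--Huisken type interior estimates for graphical MCF in arbitrary codimension (Wang's estimates in \cite{Wang2004MCF}) apply to $\gamma_t$ near $p$, which is a graph $u_t$ over $L_1$ with small gradient. With $u_0$ and its derivatives decaying to $0$ as $\rho\to\infty$ and $\sup|u_t|\to0$ by Step 1, the estimates give $|\nabla^k u_t|\to 0$ as $\rho\to\infty$, uniformly for $t\in[0,T']$. To obtain smallness rather than mere boundedness, interpolate between the $C^0$ smallness and the uniform $C^{k+1}$ bounds. This yields smooth asymptoticity to $L_1$, and likewise to $L_2$.

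The main obstacle is the uniformity in time up to $T'$ near infinity: Wang's existence-time lower bound is only short-time. I would handle this by noting that the local Lipschitz constant does not deteriorate in the far region. Steps 1 and 2 applied on $[0,t_0]$ give the same kind of asymptotic data at time $t_0$, uniformly far out, so restarting the local estimates from $t_0$ covers $[0,T']$ in finitely many steps.
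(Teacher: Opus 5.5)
Your overall architecture matches the paper's: $C^0$ asymptotics from convex barriers via Lemma \ref{convbarrier}, then a local regularity statement near $L_1,L_2$ propagated forward in time. The paper gets the regularity from area-ratio control plus Brakke regularity and higher order estimates, as in \cite{MP_flowtoroundpoint}; your pseudolocality/Wang estimates plus interpolation play the same role.

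The gap is Step 1 when $\theta=0$. This is exactly the case in which the paper relies on this lemma again: Corollary \ref{stayconvex+ramp}, the long-time existence argument for $\theta=0$, and Lemma \ref{nondegen}. There $L_1,L_2$ are parallel at a fixed distance, say $1$. So near $p$ the complement of the $\delta$-tube about $L_1$ contains both the other strand of $\gamma_0$ and the flat strip between the two strands. Your claim that this region can be filled by balls or round cylinders of radius $r\gg\sqrt T$ disjoint from $\gamma_0$ is false there (and $\gamma_t\cap B(p,r/2)$ is not close to $L_1$ alone once $r>2$).

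To see why the balls fail:
\begin{itemize}
\item A large ball avoiding both tubes can reach into the strip near its midline only by $O(1/r)$. Its own inward motion, of order $t/r$, undoes this after a time of order one. Indeed, the parallelogram law applied to the two points of $L_1,L_2$ with the same coordinate along the rays shows that, up to $O(\delta)$, after time $1/16$ no such shrinking ball contains the midline of the strip.
\item Balls or cylinders that fit between the strands have radius below $1/2$ and become extinct before time $1/8$.
\item Restarting small barriers at successive times does not help. Each step of length $\Delta$ costs an amount of order $\Delta$ in the distance bound, so the losses add up to order $T$, not to something tending to $0$ as $\rho\to\infty$.
\end{itemize}

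This breaks the rest of the argument for $\theta=0$. Your Step 2 obtains smallness of $\nabla^k u_t$ by interpolating against $\sup|u_t|\to0$. Your restarting argument needs that same smallness to recover an almost flat Lipschitz graph at each restart time. So nothing after Step 1 is established in this case.

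To repair Step 1 you need convex barriers that are thin only across the strip. One choice is compact convex approximations of cylinders over grim reapers, extended normally to the plane of $L_1,L_2$. Take the grim reapers of width slightly less than $1$, placed far out and opening in the direction of the rays. They then translate away from the compact part of $\gamma_0$ and near $p$ are exponentially close to their asymptotes, much like the grim reaper cylinders in the proof of Lemma \ref{nondegen}. Large flat pancakes as in \cite{blt_pancakes} also work. Combined with your large balls from the remaining directions, either choice gives the $C^0$ statement.

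Alternatively, you can bypass barriers, since you already use that $\kappa$ is bounded on $[0,T']$:
\begin{itemize}
\item Translate the flow by points going to infinity along $L_i$.
\item The initial bounds on all derivatives of curvature give the translates uniform $C^k$ bounds on $[0,T']$.
\item Every subsequential limit is then a bounded curvature flow out of a line or a pair of parallel lines, hence static by the uniqueness theorem of \cite{ChenYin2007_MCF_pseudolocality}.
\end{itemize}
This yields $C^\infty$ convergence to $L_1\cup L_2$ at spatial infinity directly, the $C^0$ part included.
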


\begin{proof}
  Since $L_1 \neq L_2$ its easy arrange convex barriers, using lemma \ref{convbarrier}, so that $\gamma_t$ will be $C_0$ asymptotic to $L_1$ and $L_2$ for all $t$, of course with potentially worsening rate of decay as $t$ increases. In fact, using that the initial data is initially smoothly asymptotic to $L_1, L_2$ its not hard to show that the flow will then remain smoothly asymptotic to $L_1$ and $L_2$, because the initial smoothness along with the $C^0$ estimates gives control on the area ratios of $\gamma_t$ when near $L_1$ and $L_2$ for small times. Brakke regularity and higher order estimates can then be invoked to propagate the smooth convergence to larger times, as in proposition 2.6 of \cite{MP_flowtoroundpoint}.
\end{proof}

Using that $L_1, L_2$ lay in the $xy$-plane a corollary of this, by the Sturmian theorem \cite{angenent1988zero} proceeding essentially as in section 2 of \cite{QSun_2024}, is the following:
 \begin{cor}\label{stayconvex+ramp} Suppose that additionally $\gamma$ has 1-1 convex projections, in the sense of definition \ref{conproj}. Then for all $t \in [0, T)$ $\gamma_t$ will have 1-1 convex projections with uniformly bounded slope, and in particular will remain a ramp in the case $\theta > 0$.
 \end{cor}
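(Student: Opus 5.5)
The plan is to follow the Sturmian strategy of section 2 of \cite{QSun_2024}, adapted to the noncompact setting with the help of Lemma \ref{halfline_asymp}. We split the claim into three parts: injectivity of $P_{xy}|_{\gamma_t}$, local convexity of $P_{xy}(\gamma_t)$, and the slope bound. The ramp statement will then follow from Proposition \ref{isramp} applied at each time $t$, since $\gamma_t$ stays asymptotic to $L_1,L_2$, so the opening angle $\theta>0$ is unchanged.

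\textbf{Step 1: injectivity and bounded slope.} Write $\gamma_t(u)=(x,y,z)$. Injectivity of the projection fails first only when the tangent $T$ becomes vertical somewhere, or when two distinct points acquire the same $(x,y)$ coordinates. We rule out both.
\begin{itemize}
\item[] \emph{Vertical tangents.} For a fixed unit vector $e\in\langle e_1,e_2\rangle$, the function $\langle \gamma_t, e\rangle$ satisfies the linear heat-type equation $\partial_t f = \partial_s^2 f$, with the arclength time-derivative terms absorbed as in the standard parametrization. Sturm theory (\cite{angenent1988zero}) then says that the number of zeros of $\partial_s\langle\gamma_t,e\rangle$ does not increase, and multiple zeros disappear instantly. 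Convexity plus injectivity of the initial projection means that for each $e$, $\langle \gamma_0,e\rangle$ has at most one critical point, which is nondegenerate unless it lies on a flat piece. Because $\gamma_t$ is $C^\infty$-asymptotic to $L_1,L_2$ uniformly on compact time intervals by Lemma \ref{halfline_asymp}, no zeros enter from infinity: near infinity $\partial_s\langle \gamma_t,e\rangle\approx\langle \ell_i,e\rangle$, where $\ell_i$ is the direction of $L_i$, and this is nonzero for generic $e$. A vertical tangent at $(u_0,t_0)$ would make every $e$ critical at $u_0$. Choosing $e$ so that the initial critical point lies elsewhere, or so that none exists, gives a contradiction. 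Hence $|T_{xy}|>0$.
\item[] \emph{Coincident projections.} That distinct points do not acquire the same projection follows from the same counting applied to $\langle \gamma_t - p, e^\perp\rangle$ along lines in the plane, together with the convexity established in Step 2.
\item[] \emph{Slope bound.} A quantitative lower bound on $|T_{xy}|$, i.e.\ bounded slope $|dz/ds_{xy}|$, should come from the maximum principle for the slope function $w=\langle T,e_3\rangle/|T_{xy}|$, or more simply for $\partial z/\partial x$ written in a graph-like parametrization. As in \cite{QSun_2024}, $w$ satisfies a parabolic equation without a zeroth-order source. At spatial infinity $w\to 0$ by the smooth asymptotics, so the sup of $|w|$ is attained in a compact region and is nonincreasing.
\end{itemize}

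\textbf{Step 2: convexity.} Convexity of the projected curve is equivalent to the statement that, for every line $\ell$ in the $xy$-plane, the signed distance function $d_\ell(\gamma_t(u))=\langle P_{xy}\gamma_t - p, \nu\rangle$ has at most two zeros. Since $d_\ell$ is an affine function of $\gamma_t$, it solves the linear parabolic equation $\partial_t d=\partial_s^2 d$. Again, the smooth asymptotics to $L_1,L_2$ give sign control of $d_\ell$ outside a compact set: $d_\ell$ tends to $\pm\infty$ along each end (or to a constant when $\ell\parallel L_i$, which is handled by perturbing $\ell$). The zero count is therefore finite and cannot increase, and convexity is preserved. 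Local strict convexity follows from the same argument, since an inflection point in the projection would produce a tangent line crossing the curve with three zeros after perturbation.

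\textbf{Main obstacle.} The delicate point is justifying the Sturmian zero-counting in the noncompact setting. One must exclude zeros escaping to, or entering from, spatial infinity, uniformly on $[0,t]$. We would handle this using the barrier and smooth asymptotics of Lemma \ref{halfline_asymp}, so that the relevant functions have a fixed nonzero sign outside a large ball $B(0,R(t))$ for all earlier times. After that, Angenent's theorem applies on a bounded interval with boundary values of fixed sign.
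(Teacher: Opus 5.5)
Your injectivity and convexity steps follow the paper's route: Sturmian zero counting as in section 2 of \cite{QSun_2024}, with Lemma \ref{halfline_asymp} preventing zeros from entering at infinity. One fix there: phrase the vertical-tangent contradiction as a count, since zeros move. At $t_0$, pick a horizontal $e$ for which $\langle T,e\rangle$ vanishes at some $u_1\neq u_0$ as well as at $u_0$; this gives two zeros against at most one initially.

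The genuine gap is the slope bound. The quantity $w=\langle T,e_3\rangle/|T_{xy}|$ does \emph{not} satisfy an equation without a reaction term. Set $q=|T_{xy}|$ and $p=\langle T,e_3\rangle$, and use $\partial_tT=\partial_s^2T+\kappa^2T$ componentwise. One finds $\partial_tq=\partial_s^2q+\kappa^2q-q^3\tilde\kappa^2$, where $\tilde\kappa$ is the curvature of $P_{xy}(\gamma_t)$, and hence
\[
\partial_t w=\partial_s^2 w+\frac{2\,\partial_s q}{q}\,\partial_s w+q^2\tilde\kappa^2\, w .
\]
The last term has the wrong sign for a maximum principle, and the effect is real. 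A helix keeps its pitch while its radius $r$ shrinks, so $w=h/r$ grows. Similarly, suppose a curve with one-to-one convex projection contains an arc lying over a circular arc of its projection, with $z$ linear in projected arclength (so $w\equiv h>0$ there) and $|w|<h$ elsewhere. On that arc $\partial_s w=\partial_s^2w=0$ and $\partial_t w=q^2\tilde\kappa^2h>0$, so $\sup w$ strictly increases. Thus ``$\sup|w|$ is nonincreasing'' is false and yields no uniform-in-time bound.

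Your fallback, $\partial z/\partial x$ in a graph parametrization, is fine when $\theta>0$. Writing $\gamma_t$ as a graph $(x,y(x),z(x))$ over the ramp direction $V=e_1$, both $y_x$ and $z_x$ solve $f_t=af_{xx}+a_xf_x$. This gives $\langle T,V\rangle\ge c>0$ and hence bounded slope.

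It fails for $\theta=0$. There the projection is a graph only over the axis transverse to the strip, the in-plane slope tends to $\pm\infty$ at both ends, and $z_x$ need not even be bounded initially. For example, an end $(e^{-y},y,1/y)$ is $C^\infty$-asymptotic to a ray and has convex projection, yet $dz/dx=e^{y}/y^2$. This is precisely the case where the uniform slope bound is used later, to produce the constant $s$ in Lemma \ref{reaper_in_plane}.

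The paper attributes the bound to the Sturmian principle together with the asymptotics, i.e.\ to zero counting rather than a maximum principle. The natural mechanism is to count zeros of $\langle T,n\rangle$ (equivalently, intersections with planes) for $n$ in a cone of nearly horizontal directions. That propagates one-to-one convex projections along directions near $e_3$ and keeps $T$ a definite angle away from vertical. To complete your proof you must supply an argument of this type. That includes checking that the tilted-direction condition holds initially, or at some positive time, with control of the ends.
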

 
 What remains is to show that the flow exists smoothly for all times. One approach could possibly be to extend work in \cite{QSun_2025} to this setting which it seems would most nontrivially involve modifying some barrier arguments, but instead we will take a case by case approach which is technically simpler: 
 \begin{lem} Suppose that $\gamma_t$ is the flow out of $\gamma_0$ asymptotic to half lines $L_1, L_2$ as above defined on a maximal time interval $[0, T)$. Then $T = \infty$. \end{lem}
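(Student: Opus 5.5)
We argue by contradiction: assume $T<\infty$. Since the flow is smooth on $[0,T)$, the curvature must blow up, $\sup_{\gamma_t}\kappa \to \infty$ as $t \to T$. Otherwise, by the higher-order estimates (as in Wang's local estimates used for short time existence), $\gamma_t$ would converge smoothly to a curve $\gamma_T$. By Lemma \ref{halfline_asymp}, together with Corollary \ref{stayconvex+ramp} passed to the limit, $\gamma_T$ is again asymptotic to $L_1,L_2$ with 1-1 convex projection. The Proposition would then restart the flow, contradicting maximality. It therefore suffices to show that $\kappa$ stays bounded on $[0,T)$ for every finite $T$.

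\textbf{Case $\theta>0$.} By Proposition \ref{isramp} and Corollary \ref{stayconvex+ramp}, $\gamma_t$ is a ramp for some fixed $V \in \langle e_1,e_2\rangle$. We first check that $\langle T,V\rangle$ is uniformly positive. The uniform slope bound on the projections gives positivity on compacta, and near spatial infinity $\langle T,V\rangle$ is close to $\langle \ell_i, V\rangle>0$, where $\ell_i$ are the directions of $L_1,L_2$. The smooth asymptotics of Lemma \ref{halfline_asymp}, which hold uniformly on $[0,T']$ for $T'<T$, keep the infimum from being attained at infinity. So the compact maximum principle applies to the evolution equation for $\langle T,V\rangle$, and $\min\langle T,V\rangle$ is nondecreasing.

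Next we apply the maximum principle to \eqref{ktv}. The reaction term $-\frac{\kappa}{\langle T,V\rangle}\tau^2$ is nonpositive, so $\sup \kappa/\langle T,V\rangle$ is nonincreasing. Again, $\kappa \to 0$ at infinity, uniformly on $[0,T']$, so the supremum is attained at a finite point unless it is $0$. This gives $\kappa \le C\sup\kappa_0$ on $[0,T)$, which is the required bound. We should verify that the barrier control at infinity is genuinely uniform up to $T$, not merely on each $[0,T']$. We do this using convex-domain barriers from Lemma \ref{convbarrier}, which confine $\gamma_t$ near $L_1\cup L_2$ outside a large ball for all $t\le T$. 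Brakke regularity then gives curvature bounds there, since the area ratios are close to 1.

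\textbf{Case $\theta=0$.} This is the main obstacle, because $\gamma_t$ need not be a ramp. The projection $P_{xy}(\gamma_t)$ is a convex, U-shaped curve whose two ends asymptote to the parallel direction $\ell$ of $L_1$ and $L_2$. Near the tip we use $V$ perpendicular to $\ell$ in the $xy$-plane, and on each arm we use $V=\pm\ell$. Together these give local ramp-type control and hence a graphical representation over the projection with bounded slope, by Corollary \ref{stayconvex+ramp}. The key point is that a 1-1 convex projection with bounded slope exhibits $\gamma_t$ as a graph $z=u(p,t)$ over the planar convex curve. If the planar curvature and $|u_s|$ are controlled, this is enough.

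To control these, we try the following. Away from a compact set, the smooth asymptotics plus barriers give bounds as in the $\theta>0$ case. On the compact remaining region, we use the fact that the flow projected to the plane is a convex curve and that $u_s$ is bounded. This yields a K local Lipschitz condition with $K$ close to 1 on a uniform scale $r_0$ on all of $[0,T)$, after a parabolic rescaling adapted to the bounded slope. Wang's local estimates then bound $\kappa$.

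If uniformity of the Lipschitz constant fails, the fallback is a blow-up argument. Suppose $\kappa\to\infty$ at $(x_i,t_i)$ with $t_i\to T$. Rescale and pass to a limit flow, first as a Brakke flow and then smoothly via Brakke regularity. The limit is an ancient flow with 1-1 convex projection and bounded slope. We then rule it out, since a convex-projection graph with bounded slope cannot have density above 1 at a blow-up point, so Brakke regularity contradicts $\kappa\to\infty$.
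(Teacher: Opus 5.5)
Your $\theta>0$ case is the paper's argument: a global ramp, then the maximum principle for $\kappa/\langle T,V\rangle$ via \eqref{ktv}. The reduction to a curvature bound is also fine. The gap is in the case $\theta=0$.

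Your primary route there does not work. Ramp-type control with different vectors $V$ on the tip and on the arms gives no maximum principle, since nothing controls the moving boundaries of those regions. The claim that a convex projection plus a bound on $|u_s|$ yields Wang's $K$-local Lipschitz condition, with $K$ close to $1$ at a scale $r_0$ uniform on $[0,T)$, is unjustified. $K$ close to $1$ means near-flatness at scale $r_0$, and a convex curve with a sharp turn violates this. Bounded slope is not small slope, and a parabolic rescaling does not change slopes. This uniform-scale flatness is a scale-invariant form of exactly the curvature control you are trying to prove, so the argument is circular.

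Your fallback rests entirely on the assertion that a curve with 1-1 convex projection and bounded slope cannot have density above $1$ at a blow-up point, and you give no reason for it. Without further input it is false:
\begin{itemize}
\item compact curves with convex projections do form singularities (round points, density $\sqrt{2\pi/e}>1$, by Sun's theorem);
\item a projection shaped like a thin hairpin has density ratios close to $2$ at intermediate scales, and nothing in ``convex projection with bounded slope'' prevents such hairpins from converging to a doubled line;
\item the 1-1 property is not closed under limits; it is lost exactly when two strands collapse together, so your blow-up limit need not have 1-1 convex projection.
\end{itemize}

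The missing idea, which the paper uses, is to classify and then exclude. By Altschuler, tangent flows of space CSF are planar. Since the projection is convex and the curve is connected and noncompact, the only possible singular tangent flow is a multiplicity-two line. In other words, the convex domain bounded by $P_{xy}(\gamma_t)$ would have to collapse locally onto a doubled line. This is ruled out because $P_{xy}(\gamma_t)$ stays asymptotic to the two \emph{distinct} parallel lines $P_{xy}(L_1)\neq P_{xy}(L_2)$, so the convex domain it bounds keeps nonempty interior. For instance, it contains a fixed ball far out in the strip for $t<T$, and convexity then gives a wedge of definite angle at every boundary point, which is incompatible with a doubled-line blow-up. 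Your $\theta=0$ argument never uses $L_1\neq L_2$, and without that input the density claim cannot be established.
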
 
 \begin{proof}
In the case $\theta > 0$, its easy to see there is a vector $V$ in the $xy$-plane such that $\langle T, V \rangle > c >0$ for all $x \in \gamma_0$, so $\gamma_0$ will be a ramp with respect to $V$. Since this quantity is uniformly bounded below and $\gamma_t$ will remain asymptotic to $L_1$ and $L_2$, $\langle T, V \rangle$ will be positive and bounded from below for as long as the flow exists. With this in mind, since $\gamma_t$ for any $t > 0$ is smoothly asymptotic to $L_1$ and $L_2$ the maximum of $\frac{\kappa}{\langle T, V \rangle}$ is always achieved by some point on $\gamma_t$, and so from the evolution equation \ref{ktv} we see that it is nonincreasing; in particular when $\theta > 0$ there is some constant $C = C(\gamma_0) >0$  so that $\kappa < C$ along $\gamma_t$, implying $T = \infty$ in this case. 
$\medskip$

Next we consider the case $\theta  = 0$ and suppose that a singularity occurs at the spacetime point $(p^*, t^*)$. Where $P_{xy}(\gamma_{t^*})$ is the projection of this curve, one can see that a tangent flow performed about $P_{xy}(\gamma_{t^*})$ must be a multiplicity 2 line, using that tangent flows of CSF $\gamma_t$ of any codimension are planar by \cite{Altschuler1991SingularitiesOT}. Since $P_{xy}(\gamma_{t^*})$ is asymptotic to $P_{xy}(L_1) \neq  P_{xy}(L_2)$, $P_{xy}(\gamma_{t^*})$ cannot bound a convex domain which is a contradiction since it is a limit of convex curves. 
\end{proof}

To recap we first established short time existence (in the class of bounded curvature) for flows $\gamma_t$ from space curves $\gamma$ asymptotic to half lines/rays. Then we showed that the property of being asymptotic to half lines was preserved at any finite forward time, which implied that the convex projection property would continue to hold if it did initially. With this in hand we concluded that the flow $\gamma_t$ must continue smoothly for all time. Considering subintervals $I_n = [0, n] \subset [0, \infty)$, the uniqueness result of Chen and Yin \cite{ChenYin2007_MCF_pseudolocality} gives it is the unique such smooth flow.

\section{Proof of items 2(a) and 2(b) of theorem \ref{mainthm}}\label{line}\label{exapander}

We group both of these cases together because in this setting $\gamma$ will be a ramp, which greatly simplifies matters.
$\medskip$

The proof of item 2(a) is quite simple, one reason morally speaking because no rescaling or recentering is done; indeed in this context, because the space curve shouldn't flow to spatial infinity or develop a singularity translations or dilations shouldn't be necessary to capture the long term behavior of the flow. Now, since $P_{xy}(\gamma)$ is convex and $\theta = \pi$ $P_{xy}(\gamma)$ is a line $\ell$ so that already we know $\gamma$ (and hence $\gamma_t)$ is contained in a plane; in fact by definition of convex projection it will be a bounded graph over $\ell$. Classical results of Ecker and Huisken \cite{ecker1989mean} then imply that it will flow to a line as $t \to \infty$. 
$\medskip$

For the remainder of this section we discuss item 2(b). In fact, we will show a somewhat more general result, which in a sense generalizes item 2(a):
\begin{thm}\label{expander_ramp_thm} Suppose $\gamma$ is an asymptotically conical space curve asymptotic to $L_1$ and $L_2$ where the angle $0< \theta \leq \pi$, and furthermore suppose that $\gamma$ is a ramp. Then the rescaled flow $\frac{\gamma_t}{\sqrt{2t + 1}}$ will subsequentially converge to an expander asymptotic to $L_1$ and $L_2$. 
\end{thm}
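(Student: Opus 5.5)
We work with the rescaled flow $\tilde\gamma_\tau = \gamma_t/\sqrt{2t+1}$, $\tau = \tfrac12\log(2t+1)$. It satisfies the expander equation $\partial_\tau X = \vec H + X^\perp$, whose stationary solutions are exactly the expanders. The plan is to (i) obtain uniform curvature bounds for the rescaled flow, (ii) show that the rescaled curves become asymptotically planar, (iii) use a monotone quantity to force subsequential limits to be stationary, and (iv) identify the limit with the planar expander asymptotic to $L_1 \cup L_2$.

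For (i) we use the ramp structure. Since $0<\theta\le\pi$ and $\gamma$ is a ramp, we can choose $V$ in the $xy$-plane with $\langle T,V\rangle\ge c>0$ on $\gamma_0$. As in the proof of long time existence, Lemma \ref{halfline_asymp} keeps the flow asymptotic to $L_1,L_2$, so the minimum of $\langle T,V\rangle$ is attained and cannot decrease. Likewise \eqref{ktv} shows that $\max \kappa/\langle T,V\rangle$ is nonincreasing. This gives $\kappa\le C$ for all $t$, but that bound degenerates under rescaling in the wrong direction, so we need the sharper estimate $\kappa \le C/\sqrt{t}$. We would obtain it either by comparing with the scale-invariant quantity $t\kappa^2$ through a Bernstein/Ecker--Huisken type interior estimate for graphs (a ramp is locally a graph over the $V$-direction with gradient bounded by $1/c$), or more simply by applying the estimates of \cite{ecker1989mean} to the one-dimensional graph. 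Either route gives $|\tilde\kappa|\le C$ and, with higher derivatives, smooth compactness of $\tilde\gamma_{\tau}$ on compacta.

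For (ii) we control the component of the curve orthogonal to the $xy$-plane. The height function $z$ (or any coordinate $x_k$, $k\ge3$) satisfies the heat equation along the flow. It is bounded, since $\gamma_0$ is asymptotic to rays in the $xy$-plane, and the maximum principle applies because the extrema are attained away from infinity or at $0$. Hence $|z|\le \sup_{\gamma_0}|z|$, and after rescaling $|\tilde z|\le C e^{-\tau}\to0$. The rescaled curves are therefore $C^0$-close to the $xy$-plane, and together with (i) every $C^\infty_{loc}$ subsequential limit lies in the $xy$-plane. Convex barriers from Lemma \ref{convbarrier}, namely large spheres/cylinders dilated appropriately, are used to keep the rescaled curves within a bounded distance of the cone $L_1\cup L_2$ near the origin, so limits are nonempty and asymptotic to the cone.

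For (iii) we use Ecker's expander monotonicity: the weighted length $\int e^{|X|^2/2}$, suitably renormalized relative to the cone, is nonincreasing, with derivative $-\int |\vec H + X^\perp|^2 e^{|X|^2/2}$. This yields a sequence $\tau_i\to\infty$ along which $\vec H + X^\perp\to0$ in $L^2_{loc}$. The smooth limit is then a planar complete curve satisfying the expander equation that is still a ramp and asymptotic to $L_1,L_2$. By the classification of planar expanders (Ishimura), it is the unique expander of opening angle $\theta$, and when $\theta=\pi$ it is the line. We expect the main obstacle to be (iii). The Gaussian-weighted integral may diverge because the curve is only asymptotically conical, so it must be renormalized by subtracting the cone or replaced by a localized version. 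If this proves too delicate, our fallback is a Lyapunov argument in the graphical planar picture: after (ii) we would treat the curve as a graph over the line perpendicular to $V$ and use the codimension-one convergence results for expanding graphs of Ecker--Huisken and Stavrou.
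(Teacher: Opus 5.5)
\textbf{Verdict: there is a genuine gap at step (iii).}

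\textbf{Step (iii) does not go through as written.} The functional $\int_{\Gamma_\tau} e^{|X|^2/2}\,ds$ is infinite on every curve asymptotic to a cone. You do not say what ``renormalized relative to the cone'' means. Any subtraction scheme of this type, such as relative expander entropies, needs the two curves to approach each other at infinity fast enough that the difference is integrable against $e^{|X|^2/2}$. The hypotheses only give that $\gamma_0$ approaches $L_1\cup L_2$ at some unspecified rate, which after rescaling is far too weak for this. So the formal identity $\frac{d}{d\tau}\int e^{|X|^2/2}=-\int|\vec H-X^\perp|^2e^{|X|^2/2}$ cannot be integrated, and the sequence along which the expander equation is approximately satisfied is unsupported.

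There is also a sign slip. With $\Gamma=\gamma/\sqrt{2t+1}$ one gets $\partial_\tau X=\vec H-X$, so stationary curves satisfy $\vec H=X^\perp$. Your $\vec H+X^\perp$ is the shrinker normalization, even though your weight $e^{|X|^2/2}$ is the expander one.

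Separately, ``large spheres/cylinders dilated appropriately'' is too vague to force the limit to be asymptotic to the cone of angle exactly $\theta$. You need this both for the uniqueness of the planar expander and for the conclusion of the theorem.

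\textbf{Your fallback is the right idea but omits the step that makes it work.} $\Gamma_\tau$ is never exactly planar, so Ecker--Huisken or Stavrou cannot be applied to it directly. The paper proceeds as follows.
\begin{enumerate}
\item It obtains rescaled curvature bounds. These come locally on $|x|\le M$ from the scale-invariant gradient bound: De Giorgi--Nash--Moser for $y_x,z_x$, then Schauder for $y_\tau=ay_{xx}+xy_x-y$. Your global bound $\kappa\le C/\sqrt t$ would also suffice. However, Ecker--Huisken is a codimension-one result and must be re-derived. That is possible, since $v=1/\langle T,V\rangle$ and $\kappa^2$ satisfy the same differential inequalities as in codimension one, using Kato.
\item Together with the entropy area bounds, this lets it extract a limit of the time-translated flows $\Gamma_{\tau+\tau_i}$ \emph{as flows}.
\item The limit flow $\Pi_\tau$ lies exactly in the $xy$-plane. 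Your step (ii), via the heat equation for $z$, is a clean substitute for the paper's pancake barriers here.
\item Barriers show $\Pi_\tau$ is $C^0$-asymptotic to the cone of angle $\theta$. These are planes parallel to $L_1,L_2$, and cylinders $\rho\times\R$ over convex curves $\rho$ asymptotic to $L_1\cup L_2$, whose rescaled flows converge to cylinders over the expander by Stavrou.
\item Stavrou applied to the planar curve $\Pi_0$, whose rescaled flow is $\Pi_\tau$, gives $\Pi_\tau\to$ the expander. A diagonal argument then yields $\tau_i'\to\infty$ with $\Gamma_{\tau_i'}$ converging to it.
\end{enumerate}
No monotone quantity is needed.
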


As for the case of item 2(a), its easy to show the the curvature of $\gamma_t$ will be bounded by some uniform constant for all times, but in this case one expects (and indeed, it must occur) that the flow clears out, in that for any bounded domain $D$ there will be some time $T(D)$ for which $\gamma_t \cap D = \emptyset$ for $t > T$. To proceed we rescale the flow, following Ecker and Huisken:
\begin{defi}[Following Ecker-Huisken \cite{ecker1989mean}]
\label{definition of the rescaled CSF}
We define the \emph{rescaled CSF} to be
    \[
    \Gamma(u,\tau):=\frac{\gamma(u,t)}{\sqrt{2t+1}},\quad 
    \tau:=\frac{1}{2}\log(2t+1).
    \]

\end{defi}
In their seminal paper Ecker and Huisken then show that codimension 1 graphs $M$ satisfying some growth conditions will, after applying the rescalings above, converge to an expander. An important ingredient in their argument is their estimate (see proposition 4.4 in \cite{ecker1989mean}): 
\begin{equation} 
t^{m+1} | \nabla^m A|^2 \leq C(m)
\end{equation}
where the constant $C(m)$ depends also on $n$ and the initial gradient bound (roughly) of the graph $M$. With this one can see after rescaling the flow by $\frac{1}{\sqrt{2t+1}}$, the rescaled curvature will remain bounded so that one could apply smooth compactness results. Note that taking $m = 0$ as $t \to \infty$, $|A|^2 \to 0$ along the flow $M_t$ of $M$, which of course is a stronger conclusion than what one obtains from applying the maximum principle to equation \ref{ktv}. Since $|D\gamma|$ is scaling invariant we can at least say the following using these estimates however, which will be important below:
\begin{lem} $|D\gamma|$ is uniformly bounded, in $x$ and $t$, on the rescaled CSF. 
\end{lem}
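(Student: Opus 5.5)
The plan is to read $|D\gamma|$ as the slope of $\gamma_t$ written as a graph over the line $\R V$, where $V$ is the ramp direction, and to control it through the quantity $w:=\langle T,V\rangle$. Normalize $|V|=1$ and write $\gamma_t$ as $s\mapsto sV+u(s,t)$ with $u\perp V$. Then $|Du|^2 = w^{-2}-1$. So a uniform positive lower bound $w\ge c>0$ on $\gamma_t$ for all $t\ge 0$ gives $|Du|\le \sqrt{c^{-2}-1}$ uniformly in $x$ and $t$.

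\textbf{Step 1: a lower bound on $w$ at $t=0$.} In the setting of Theorem \ref{expander_ramp_thm} we have $0<\theta\le\pi$. As in the proof that $T=\infty$ in Section \ref{exist_section}, we choose $V$ so that $\langle T,V\rangle$ tends to $\langle \ell_1,V\rangle>0$ and $\langle \ell_2,V\rangle>0$ along the two ends, where $\ell_1,\ell_2$ are the oriented directions of $L_1,L_2$. This is possible because the rays are distinct and non-antipodal in the relevant sense once $\gamma$ is oriented as a ramp. Since $\gamma$ is a ramp, $w\ge 0$ everywhere. I expect that $w>0$ on compact pieces, possibly after perturbing $V$ slightly using that $\gamma$ is embedded and its ends are controlled; this is the point where $\inf_{\gamma_0}w=:c>0$ needs to be checked.

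\textbf{Step 2: preserving the bound along the flow.} By Lemma \ref{halfline_asymp}, each $\gamma_t$ is smoothly asymptotic to $L_1,L_2$. Hence on each $\gamma_t$ the quantity $w$ tends to the same positive limits at the ends. Consequently, if $\inf_{\gamma_t}w$ ever dropped below $\min\{c,\langle\ell_1,V\rangle,\langle\ell_2,V\rangle\}$, the infimum would be attained at an interior point. The evolution equation
\[
\partial_t w = \partial_s^2 w + \kappa^2 w
\]
then shows, by the classical maximum principle as remarked after Definition \ref{ramp}, that the minimum cannot decrease. So $w\ge c$ for all $t\in[0,\infty)$.

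\textbf{Step 3: passing to the rescaled flow.} The rescaling $\Gamma=\gamma/\sqrt{2t+1}$ is a homothety together with a reparametrization of time. It leaves the unit tangent $T$ unchanged, so $w$ is the same on $\Gamma(\cdot,\tau)$ as on $\gamma_t$. Likewise the graph slope $|Du|$, being dilation invariant, is unchanged. This gives the uniform bound on the rescaled flow.

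The main obstacle is the uniformity near spatial infinity in Step 2. Lemma \ref{halfline_asymp} gives asymptotics only for each fixed finite time, with possibly worsening rates. The argument therefore has to use only the limiting values of $w$ at the ends, which are fixed by $L_1,L_2$, and not any uniform rate; a minimum that drops below the end values must then be attained in the interior.
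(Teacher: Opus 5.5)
\textbf{The gap is in Step 1.} Your Steps 2 and 3 are the argument the paper relies on: a lower bound on $\langle T,V\rangle$ propagated by the maximum principle as in Section \ref{exist_section}, plus dilation invariance of the slope. But Step 1 is the only nontrivial input, and you leave it unproved. The route you suggest for it, perturbing $V$, fails in general.

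For a ramp in the weak sense of Definition \ref{ramp}, a bound $\inf_{\gamma_0}w>0$ need not hold for any direction, and perturbing $V$ cannot create one. For example, take an embedded planar curve asymptotic to the two halves of the $x$-axis, so $\theta=\pi$. Let it rise through a point with tangent $e_2$ (locally $x=-1+(y-1)^3$), run to the right, and later descend through a point with tangent $-e_2$. It satisfies $\langle T,e_1\rangle\ge 0$. However, no $V'$ has both $\langle e_2,V'\rangle>0$ and $\langle -e_2,V'\rangle>0$, so at $t=0$ its slope over every line is infinite.

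So in the generality of Theorem \ref{expander_ramp_thm} you have two options. You can assume $\langle T,V\rangle\ge c>0$ from the start. Or you can claim the bound only for $t\ge t_0>0$: the strong maximum principle gives $w>0$ on $\gamma_{t_0}$, and tilting $V$ slightly toward a direction positive on both end tangents (possible since $\theta>0$) then gives a uniform bound. That weaker version is all the asymptotic analysis needs.

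\textbf{What the paper uses instead.} In the setting of the main theorem, the missing ingredient is the convex projection hypothesis.
\begin{itemize}
\item Let $\ell_i$ be the outward unit direction of $L_i$, and let $\hat T=P_{xy}T/|P_{xy}T|$ be the unit tangent of the projected curve.
\item Since $P_{xy}(\gamma_t)$ is convex and asymptotic to $L_1,L_2$, $\hat T$ turns monotonically from $-\ell_1$ to $\ell_2$, through total angle $\pi-\theta<\pi$.
\item Take $V$ in the $xy$-plane along the bisector of this arc. Then $\langle\hat T,V\rangle\ge\sin(\theta/2)$.
\item Corollary \ref{stayconvex+ramp} gives a uniform bound on the slope of $\gamma_t$ over its projection, i.e.\ $|P_{xy}T|\ge c_0>0$.
\item Hence $w=|P_{xy}T|\,\langle\hat T,V\rangle\ge c_0\sin(\theta/2)$ on every $\gamma_t$.
\end{itemize}
This yields the lemma directly. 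It also makes your Step 2, along with its noncompact maximum-principle concerns, unnecessary.
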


In particular, these bounds are useful in appling Nash--Moser estimates. In the following, we use $(x,y,z)$ to denote the coordinates of the rescaled CSF $\Gamma_t$. Recalling by corollary \ref{stayconvex+ramp} that $\gamma_t$ and hence $\Gamma_t$ are ramps where without loss of generality $V = e_1$, we see that both $y$ and $z$ are graphical in $x$. The evolution equation $y$ satisfies in the following:
\begin{equation}
\label{eq: rescaled graphical equation to derive estimates}
y_\tau=
ay_{xx}+xy_x-y, \quad a=\frac{1}{1+y_x^2+z_x^2}.
\end{equation}
Where the evolution equation for $z$ is similar. With this setup one can show the following, which suffices to take convergent subsequences of $\Gamma_t$ locally smoothly.
\begin{prop}
\label{curvature bound locally}
For each fixed $M<+\infty$, for $|x|\leq M$ and $\tau\in[0,+\infty)$, the rescaled curvature is uniformly bounded by some constant depending on $M$ and the global gradient bound.
\end{prop}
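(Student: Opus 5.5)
The plan is to treat the rescaled flow as a uniformly parabolic quasilinear system for the graph functions $(y,z)$ over the $x$-axis and to bootstrap regularity from the gradient bound. By the preceding lemma, $|D\gamma|$ is uniformly bounded on the rescaled CSF. Since $\Gamma_t$ is a ramp with respect to $V=e_1$, this gives a uniform bound $|y_x|+|z_x|\le \Lambda$. Hence the coefficient satisfies $a=(1+y_x^2+z_x^2)^{-1}\in[(1+\Lambda^2)^{-1},1]$, uniformly in $\tau$.

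The first step is to differentiate \eqref{eq: rescaled graphical equation to derive estimates} in $x$. Setting $w=y_x$ gives
\[
w_\tau=(a w_x)_x+xw_x,
\]
because the terms $xy_{xx}+y_x-y_x$ combine to $x w_x$. The analogous equation holds for $z_x$. On the region $|x|\le 2M$ the drift coefficient $x$ is bounded by $2M$, and $a$ is bounded and uniformly elliptic. So $w$ solves a divergence-form linear equation with merely bounded measurable coefficients. The Nash--Moser--De Giorgi theory then gives a uniform interior H\"older bound $[w]_{C^{\alpha,\alpha/2}}\le C(M,\Lambda)$ on parabolic cylinders $Q=[-3M/2,3M/2]\times[\tau_0,\tau_0+1]$ with $\tau_0\ge 1$. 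Near $\tau=0$ we instead use the smoothness of the initial curve and short-time estimates from Section \ref{exist_section}.

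The second step: applying this to both $y_x$ and $z_x$ makes $a$ uniformly H\"older in space-time. Equation \eqref{eq: rescaled graphical equation to derive estimates} can then be viewed as a linear nondivergence equation $y_\tau=ay_{xx}+xy_x-y$ with H\"older coefficients. Interior Schauder estimates bound $\|y\|_{C^{2+\alpha}}$ on a smaller cylinder in terms of $\sup|y|$ on the larger one. Note that $\sup|y|$ itself is not obviously bounded, since the curve may be far from the axis. I would avoid this by subtracting the value of $y$ at a point, using $y-y(0,\tau)$, or equivalently by applying Schauder directly to the $w$ equation in the form $w_\tau=aw_{xx}+(a_x+x)w_x$ after first obtaining a $C^{1+\alpha}$ bound on $w$. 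The cleanest route is to apply Schauder to $y-\ell_\tau$ in divergence form, $y_\tau = (a y_x)_x - a_x y_x + xy_x - y$. Only gradient bounds, which are uniform, enter here; the zeroth-order term can be handled because $|y-y(0,\tau)|\le \Lambda|x|$. The output is bounds on $y_{xx}$ and $z_{xx}$ for $|x|\le M$.

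Finally, the rescaled curvature is
\[
\kappa=\frac{|(y_{xx},z_{xx})\wedge\ldots|}{(1+y_x^2+z_x^2)^{3/2}}\le |y_{xx}|+|z_{xx}|,
\]
so it is bounded by $C(M,\Lambda)$. I expect the main obstacle to be the unbounded drift $xy_x$ together with the term $-y$: they make the estimates only local in $x$, and they force care that no constant depends on $\sup|y|$. Differentiating once to kill the $-y$ term, as in the first step, is the key device I would rely on.
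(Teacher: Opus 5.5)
Your first step matches the paper's: differentiating gives $w_\tau=(aw_x)_x+xw_x$ for $w=y_x$ (and likewise for $z_x$), and De Giorgi--Nash--Moser then makes $y_x$, $z_x$, and hence $a$, H\"older on $|x|\le 2M$. The gap is in your second step.

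The paper applies interior Schauder estimates to the undifferentiated equation, $\|y_{xx}\|_{L^\infty(Q_M)}\le C\|y\|_{L^\infty(Q_{2M})}$. It then bounds $|y|$ on $|x|\le 2M$ uniformly in $\tau$ using the barriers of Lemma \ref{lim_props}: the slab and wedge containing $\gamma_t$, and the convex cylinders over expanders, all of which converge under the rescaling. So $\sup|y|$ is controlled after all. You set this bound aside, and each substitute you offer is circular as written.

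\emph{Subtracting $y(0,\tau)$.} Since $\partial_\tau y(0,\tau)=a(0,\tau)y_{xx}(0,\tau)-y(0,\tau)$, the function $\tilde y=y-y(0,\tau)$ satisfies
\[
\tilde y_\tau=a\tilde y_{xx}+x\tilde y_x-\tilde y-a(0,\tau)\,y_{xx}(0,\tau).
\]
The forcing term is exactly the unknown you are trying to bound, and the bound $|\tilde y|\le\Lambda|x|$ does nothing about it. Subtracting $ce^{-\tau}$ instead is the legitimate normalization, since it solves the homogeneous equation. But you would then still need a separate bound on the time oscillation of $y(0,\cdot)$, which you do not supply.

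\emph{Your other two forms.} The nondivergence form $w_\tau=aw_{xx}+(a_x+x)w_x$ and your ``divergence form'' $y_\tau=(ay_x)_x-a_xy_x+xy_x-y$ both contain
\[
a_x=-2(y_xy_{xx}+z_xz_{xx})(1+y_x^2+z_x^2)^{-2}
\]
as a coefficient. Also, ``first obtaining a $C^{1+\alpha}$ bound on $w$'' is the conclusion itself.

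\textbf{How to repair.} There are two options.

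First, you can follow the paper and supply the barrier bound on $|y|$.

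Second, you can make your ``differentiate once to kill $-y$'' idea work by never leaving divergence form. Write
\[
w_\tau=(aw_x+xw)_x-w,
\]
where, after your first step, both $a$ and the flux term $xw$ are H\"older on $|x|\le 2M$. Then apply the divergence-form (Campanato-type) Schauder estimate. This gives
\[
\|w_x\|_{C^{\alpha,\alpha/2}(Q_M)}\le C(M,\Lambda)\bigl(1+\|w\|_{L^\infty(Q_{2M})}\bigr)\le C(M,\Lambda),
\]
with no reference to $\sup|y|$. This route yields a constant depending only on $M$ and the gradient bound, as the statement asserts. The paper's route, by contrast, also feeds in the barrier bound on $|y|$.

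Your handling of $\tau$ near $0$ and your final curvature bound $\kappa\le|y_{xx}|+|z_{xx}|$ are fine.
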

As mentioned this follows via Nash--Moser estimates and is discussed more at the end of this section. Continuing on, because $\gamma$ is asymptotic to $L_1, L_2$ it has bounded entropy, in the sense of Colding and Minicozzi \cite{ColdingMinicozzi2012}, implying local area bounds. With this and the derivative estimates in hand, for any sequence $\tau_i \to \infty$ we can extract a subsequence $\tau_{i_j}$ such that $\Gamma_{t - \tau_{i_j}}$ converges as a mean curvature flow on compacta; applying a diagonal argument denote the limit by $\Pi_t$, $t \in [0, \infty)$, which has the following properties:

\begin{lem}\label{lim_props} The limit curve $\Pi_t$ is a nonflat planar CSF, asymptotically conical with opening angle $\theta$ in $C^0$ topology.
\end{lem}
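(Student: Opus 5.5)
We need to show three things about the limit $\Pi_t$: it is planar, it is asymptotically conical with opening angle $\theta$ in the $C^0$ sense, and it is nonflat. We take them in that order, working with the rescaled flow $\Gamma_\tau$, which is a ramp over the $x$-axis with components $y(x,\tau), z(x,\tau)$ and uniformly bounded gradients by the lemma on $|D\gamma|$.

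\textbf{Planarity.} Since $L_1,L_2$ lie in the $xy$-plane, we aim to show that $\sup_x |z(x,\tau)|$ stays bounded for the unrescaled flow. The natural barriers for this are the slabs $\{|z|\le C\}$, with $C$ chosen from the initial data; the convex-barrier Lemma \ref{convbarrier} applies (using large convex cylinders/slabs, which are static convex solutions), and the flow only meets them in bounded regions because it is asymptotic to $L_1\cup L_2$. Alternatively, the maximum principle for $z$ can be applied directly, since $z\to 0$ at spatial infinity by Lemma \ref{halfline_asymp}. For the rescaled flow this gives $|z|\le C/\sqrt{2t+1}\to 0$, so every limit has $z\equiv 0$ and $\Pi_t$ is a planar CSF. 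Smooth convergence on compacta is supplied by Proposition \ref{curvature bound locally}.

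\textbf{Asymptotic cone.} Write $L_i=\{s v_i: s\ge 0\}$ up to translation. We trap $\Gamma_\tau$ between barriers that are invariant under the rescaling. In the plane, one can place rescaled expanders (or wedges/convex regions) slightly inside and outside the cone $L_1\cup L_2$, using that $\gamma_0$ lies within bounded distance of $L_1\cup L_2$ near infinity. Concretely, $\gamma_0$ lies in the convex region bounded by a translate $p+\mathcal{C}$ of the cone region with $\theta$ replaced by a slightly larger angle, and outside a similar region with a slightly smaller angle near infinity.

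The simpler route, which we will follow first, uses the uniform-in-time bound on $\mathrm{dist}(\gamma_t, L_1\cup L_2)$ outside a ball of radius $\sim\sqrt t$. The convex projection $P_{xy}(\gamma_t)$ is a convex curve asymptotic to $P_{xy}(L_1\cup L_2)$, so it lies on one side of these lines, and the ramp bound gives slope control. After dividing by $\sqrt{2t+1}$, $\Gamma_\tau$ lies in a region contained in the convex hull side of the cone and within $C/\sqrt{2t+1}$ of it outside a compact set whose size is controlled by the gradient bound. The other side is controlled by an expanding planar barrier: a fixed expander $E$ asymptotic to a cone of angle slightly larger than $\theta$, translated, stays disjoint from the flow via Lemma \ref{convbarrier}, with $E$ bounding a convex region. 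Under rescaling this barrier converges to $E$. Letting the angles tend to $\theta$ yields $C^0$ asymptotics to the cone of opening angle $\theta$ for $\Pi_t$.

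\textbf{Nonflatness.} If $\theta<\pi$, a flat line cannot be asymptotic to two rays with angle $\theta\neq\pi$, so this follows from the previous step. When $\theta=\pi$, nonflatness should be read as the limit being the line $L_1\cup L_2$, which is consistent with this argument.

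The main obstacle is making the outer barrier argument rigorous. Lemma \ref{convbarrier} requires intersections to occur only in a bounded region, and this must be checked uniformly as $t\to\infty$ so that the $C^0$ cone asymptotics survive the limit. We would handle this by comparing the rescaled distance to the cone at radius $R$, using the local gradient bounds to prevent $\Gamma_\tau$ from drifting into the region between the two barriers at large radii.
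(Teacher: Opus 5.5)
Your planarity step and the outer half of the cone bound are fine, and they essentially match the paper. The paper keeps the curve in a slab with pancake barriers and inside a wedge with vertical planes over $L_1,L_2$; your half-spaces (or the maximum principle for $z$) and the convex projection do the same job. Your treatment of nonflatness is also fine; the $\theta=\pi$ caveat is a defect of the statement, not of your argument.

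The gap is the other half of the $C^0$ asymptotics: showing $\Pi_t$ is not asymptotic to a strictly narrower cone. That is the real content of the lemma, and neither of your two arguments for it works.

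\textbf{The uniform distance bound is unproved.} You assert a uniform-in-time bound on $\mathrm{dist}(\gamma_t,L_1\cup L_2)$ outside a ball of radius $\sim\sqrt t$, but it is never established, and it does not follow from the gradient bound. A bounded-slope ramp whose convex projection lies inside the wedge is perfectly compatible with the rescaled curves $\Gamma_\tau$ converging on compacta to a convex curve in a narrower cone. So ``using the local gradient bounds to prevent drifting'' has no mechanism behind it.

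\textbf{The expander barrier points the wrong way.}
\begin{itemize}
\item Lemma \ref{convbarrier} only keeps the curve \emph{outside} an evolving convex region. The reverse fails in higher codimension: a circle of radius close to $R$ inside a shrinking sphere of radius $R$ escapes it.
\item So the barrier must be a convex region inside the region bounded by $P_{xy}(\gamma_0)$, which lies in a wedge of angle $\theta$. An expander of angle larger than $\theta$ cannot be placed there, since its recession cone is too big; it would meet $\gamma$ in an unbounded set.
\item If instead the expander's region contains the curve, Lemma \ref{convbarrier} says nothing.
\item In $\mathbb{R}^3$ a planar expander does not bound a region; you need the cylinder $E\times\mathbb{R}$.
\end{itemize}
Your first paragraph under ``Asymptotic cone'' had the right picture, with a smaller-angle region inside, but the argument you actually carry out does not use it.

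\textbf{What is needed, and what the paper does.}
\begin{enumerate}
\item Take a convex planar curve $\rho$ asymptotic to inward translates of $L_1,L_2$, lying inside the convex region bounded by $P_{xy}(\gamma_0)$. The convex cylinder $\rho\times\mathbb{R}$ is then disjoint from $\gamma_0$.
\item By Lemma \ref{convbarrier}, with compact approximations, $\gamma_t$ stays outside $\rho_t\times\mathbb{R}$ for all $t$. Because the asymptotic rays differ at each finite time, any intersection could only occur in a bounded set. So the uniformity in $t$ you worried about is not needed.
\item By Stavrou, $\rho_t/\sqrt{2t+1}$ converges to the expander of angle $\theta$.
\item Hence $\Pi_t$ is trapped between that expander and the wedge, and both are asymptotic to the same cone.
\end{enumerate}
A translated exact expander of angle $\theta$, which evolves self-similarly about its centre, would also work. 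So would expanders of angle $\theta-\delta$ with $\delta\downarrow 0$. In every version the angle must be at most $\theta$, not above it.
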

\begin{proof} Since $\gamma$ is asymptotic to $L_1, L_2$ it lays in a slab and will for all forward times, using lemma \ref{convbarrier} with appropriate pancake barriers \cite{blt_pancakes} running parallel along the sides of the slab. Denoting by $P_1, P_2$ the parallel planes bounding this slab, under the expander rescaling as $\tau \to \infty$ these planes are brought towards the origin, implying $\Pi_t$ is planar, and say contained in the plane $P_3$ -- of course because $L_1$, $L_2$ lay in the $xy$-plane $P_3$ must be this plane as well. Using appropriate planes parallel to $L_1$ and $L_2$ which intersect $P_3$ orthogonally, one can similarly see that $\Pi$ is bounded by a wedge in $P_3$ of angle $\theta$. To see its asymptotically conical with opening angle $\theta$ (and not narrower), we consider convex cylinders $\Phi$, i.e. convex surfaces given by the form $\rho(s) \times \R$ where $\rho$ is a convex curve asymptotic to $L_1$ and $L_2$ which don't intersect $\Gamma_t$. By \cite{stavrou1998selfsimilar}, under the rescaled flow $\rho_t$ will flow to an expander, and hence under the rescaled flow $\Phi_t$ will flow to a cylinder over an expander of angle $\theta$. By lemma \ref{convbarrier} and an approximation argument by closed surfaces similar to the existence and uniqueness argument in section \ref{exist_section} one can see $\Phi_t$ itself serves as a barrier to $\gamma_t$ and hence its rescaling to $\Gamma_\tau$, implying the claim on $\Pi_t$. 
\end{proof}
Now, if we apply the rescaled flow to $\Pi_0$ we see that by work of Stavrou \cite{stavrou1998selfsimilar} it must flow to an expander of opening angle $\theta$; since $\Pi_t$ arose as the limit of $\Gamma_{t - \tau_{i_j}}$ we see that there is a sequence $\tau_i \to \infty$ for which $\Gamma_{\tau_i}$ converges to a planar expander of opening angle $\theta$, as claimed, completing the proof of Theorem \ref{expander_ramp_thm} and hence item 2(b) of Theorem \ref{mainthm} modulo the rescaled curvature bounds:

\begin{proof}[Proof of proposition \ref{curvature bound locally}]

We show $|y_{xx}|$ is bounded, $|z_{xx}|$ is similar. Using that $\gamma$ is asymptotic to $L_1, L_2$ and $\theta > 0$ we have that $\gamma$ is a ramp with $y_x, z_x$ bounded. By De Giorgi-Nash-Moser type estimates, because $y_x$ and $z_x$ are bounded $y_x$ is Hölder continuous, see for example \cite{ladyzhenskaia1968linear}. Similarly $z_x$ is Hölder continuous giving that $a$ is Hölder continuous. With this in hand considering equation \eqref{eq: rescaled graphical equation to derive estimates} by Schauder estimates,
\[
\|y_{xx}\|_{L^\infty(Q_{M})}\leq C\|y\|_{L^\infty(Q_{2M})},
\]
One can see $|y|$ is uniformly bounded for all time for $|x|\leq M$ for fixed $M$ via a barrier argument similar to the proof of lemma \ref{lim_props}. \end{proof}

\section{Proof of item 2(c) of theorem \ref{mainthm}}

We next suppose that $\gamma$ is asymptotic to two parallel lines $L_1, L_2$, which we normalize to be distance 1 apart. In the following it will be helpful to fix a reference plane with which to measure height, where we recall $L_1$ and $L_2$ both lay in the $xy$ plane:
\begin{defi}
Denote by $Q$ a fixed plane perpendicular to the $xy$ plane and the slab bounded by $L_1$ and $L_2$ disjoint from $\gamma$, and $d_Q$ the signed distance from $Q$ so that for $x \in \gamma$ we have $d_Q(x) > 0$.
\end{defi}
Because $\gamma$ is connected its easy to see such a plane exists. By the avoidance principle, $d_Q > 0$ on $\gamma_t$ is preserved along the flow. By the assumption that $Q$ is not parallel to $L_1$ or $L_2$ there is (at least one) global minimum for $d_Q$ along $\gamma$, and this continues to hold true for $\gamma_t$ for any $t > 0$. For a given $t$, denote the minimum value obtained by $d_Q$ along $\gamma_t$ by $m_t$. Guided by the ansatz that such flows should be modeled asymptotically on grim reapers, we consider the family of flows $\gamma^i_t = \gamma^i_{t - t_i} - p_i$ where $d_Q(p_i) = m_{t_i}$ and $t_i$ is a sequence $t_i \to \infty$ to be determined; we wish to take a limit of these flows. 
$\medskip$

An important complication in this case is that it doesn't seem clear that $\gamma$ must be a ramp however, and in particular it is not clear we have control on $\kappa$. With this in mind to proceed we instead take weak limits. In the following, we denote by $\mathcal{D}$ the slab in $P$ bounded by $L_1$ and $L_2$:
\begin{lem}\label{reaper_in_plane} Suppose that $\gamma^i_t = \gamma^i_{t - t_i} - p_i$ where $d_Q(p_i) = m_{t_i}$ achieves the minimum of $d_Q$ on $\gamma_t$ and $t_i \to \infty$. Then as Brakke flows $\gamma^i_t$ subsequentially converge to a Brakke flow $\Gamma_t, 0 \leq t < \infty$ for which 
\begin{enumerate}
    \item The support of $\Gamma_t$ is contained in the plane $P$ containing $L_1$ and $L_2$.
    \item The support of $\Gamma_t$ bounds a convex set $D_t \subset \mathcal{D}$. 
\end{enumerate}
\end{lem}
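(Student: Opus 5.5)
The plan is to obtain the limit from Brakke's compactness theorem and then establish the two geometric properties by barrier arguments that survive weak convergence.

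\textbf{Compactness.} We need uniform area bounds for $\gamma^i_t$ on parabolic cylinders. Since $\gamma$ is asymptotic to two parallel rays, it has finite Colding--Minicozzi entropy. By Huisken monotonicity this bound persists along $\gamma_t$, and it is invariant under the translations $x\mapsto x-p_i$ and time shifts. This gives uniform local area ratio bounds $\mu^i_t(B_r(x))\le C r$. Brakke's compactness theorem, combined with a diagonal argument over the intervals $[-t_i+\cdot,\infty)$ restricted to $t\ge0$, then yields a subsequence converging to a Brakke flow $\Gamma_t$, $t\in[0,\infty)$. Before passing to the limit I would note one point: after shifting, $p_i$ sits at the origin, and $d_Q(p_i)=m_{t_i}$ is the minimum. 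This means each translated curve $\gamma^i_0$ lies in the half space $\{d_Q\ge 0\}$ shifted so that its boundary plane $Q-p_i$ passes through the origin. I will use this ordering later, and also to show the limit is nonempty.

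\textbf{Planarity (item 1).} As in the proof of Lemma \ref{lim_props}, $\gamma$ lies in a slab $\{|z|\le h\}$. Pancake barriers and Lemma \ref{convbarrier} keep $\gamma_t$ in slabs bounded by planes parallel to the $xy$-plane. The key additional claim is that the slab width shrinks to $0$ as $t\to\infty$. I would derive this by exhibiting, for any $\epsilon>0$, a convex barrier outside $\gamma_t$ that eventually forces $|z|<\epsilon$ on the portion of $\gamma_t$ within bounded distance of its tip. A natural candidate is a convex surface of the form $\rho\times\R$ or a translating bowl-type surface. An alternative first attempt: because $P_{xy}|_{\gamma_t}$ is injective onto the convex curve $P_{xy}(\gamma_t)$, $z$ becomes a function on that convex planar curve. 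Since $z\to0$ at both ends, the maximum principle for $z$ along the flow should give decay, and in the limit $z\equiv 0$ on $\operatorname{supp}\Gamma_t$. Since $p_i$ has $z$-coordinate tending to $0$ as well, the support lies in $P$. Similarly, the planes through $L_1,L_2$ orthogonal to $P$ act as barriers (through pancakes), so $\operatorname{supp}\Gamma_t\subset\mathcal D$ after the translation is normalized by the $y$-component of $p_i$ staying in $[0,1]$.

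\textbf{Convexity (item 2).} The projections $P_{xy}(\gamma^i_t)$ are convex curves asymptotic to the two parallel lines, so each bounds a convex region $D^i_t\subset\mathcal D$. By Blaschke selection, after a subsequence, $D^i_t$ converges in local Hausdorff distance to a closed convex set $D_t$; a countable dense set of times plus monotonicity handles all $t$. I would then show $\operatorname{supp}\Gamma_t=\partial D_t$. The inclusion $\operatorname{supp}\Gamma_t\subset\partial D_t$ follows from measure convergence together with $z\to0$. The reverse inclusion uses the density lower bound of the unit regular flow combined with the fact that the measures $\mu^i_t$ are carried by curves whose projections trace $\partial D^i_t$. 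This step is the main obstacle: without curvature bounds, $\gamma^i$ could in principle pick up multiplicity or vertical oscillation that disappears in the limit. I expect to control this using the one-to-one projection, which says the length over any projected arc is at least the projected length, together with the entropy bound.
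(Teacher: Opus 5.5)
\textbf{There is a genuine gap in item (1), planarity.} Your compactness step matches the paper's: entropy bound, local area bounds, Brakke compactness. Your treatment of item (2) is, if anything, more careful than the paper's one-line assertion. Once planarity is known, the inclusion $\partial D_t\subset\operatorname{supp}\Gamma_t$ follows from your length-over-projection remark together with $\limsup_i\mu^i_t(K)\le\mu_t(K)$ for compact $K$, so that is not the real obstacle.

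Neither of your routes to planarity produces $z\to 0$ near the tip. The maximum principle for $z$, which satisfies $\partial_t z=z_{ss}$ along the flow, only shows that $\sup_{\gamma_t}|z|$ is nonincreasing. It gives no decay, and certainly not decay at the recentering points $p_i$, which run off to infinity. Your barrier alternative ($\rho\times\R$, or a bowl-type surface) is left unspecified, and it cannot work on its own. Here the recentering is a pure translation that follows the tip, so unlike Lemma \ref{lim_props} no parabolic rescaling pulls the pancake slab planes in. Any self-similar or translating convex barrier flattens toward the $xy$-plane only at some rate in $t$. That rate is useless unless you know how far the tip has travelled compared with it.

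\textbf{The missing idea is a quantitative lower bound on the tip speed.} The paper gets it from the uniform slope bound supplied by the Sturmian principle. Let $\tilde\kappa$ be the curvature of $P_{xy}(\gamma_t)$ and $\vec u$ its unit normal lifted to $\gamma_t$. Then $(x_s^2+y_s^2)\tilde\kappa=\gamma_t''\cdot\vec u$, and $\gamma_t''\cdot\vec u$ is exactly the normal velocity of the projected curve. So the projection moves inward with speed at least $\tilde\kappa/s$ for a uniform $s$. Grim reapers of width greater than $1$, with speed slowed by the factor $s$, are then outer barriers for $P_{xy}(\gamma_t)$, which gives $m_t\ge ct+m_0$.

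With linear motion of the tip in hand, the paper uses cylinders over translated expanders $E$ in the $yz$-plane asymptotic to the $y$-axis, and their reflections, as convex barriers via Lemma \ref{convbarrier}. The flow $E_t$ deviates from the $y$-axis by $\sqrt{2t+1}\,r(d/\sqrt{2t+1})$ with $r(d)=O(e^{-c_0d^2}/d)$. Since $d\gtrsim ct$ at the tip, these barriers pinch $\gamma_t$ to within $e^{-c_1t}$ of the $xy$-plane there. Thinner, farther-out expanders handle the case where $\gamma_0$ is only asymptotic to the $xy$-plane. Your planarity step needs some such pairing: a tip speed that beats $\sqrt t$, set against a barrier with fast spatial decay. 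Without it, item (1) is unproved.
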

\begin{proof}
    Because $\gamma$ is asymptotic to $L_1, L_2$ it has bounded entropy, in the sense of Colding and Minicozzi, implying local area bounds. Thus, we may apply Brakke compactness theorem to find a limiting Brakke flow $\Gamma_t$ after potentially passing to a subsequence. 
    $\medskip$
    
     Since $P_{xy}(\gamma)$ is convex and $\gamma$ is asymptotic to $L_1$ and $L_2$ $P_{xy}(\gamma)$ must be contained in $\mathcal{D}$, and this continues to hold by lemma \ref{convbarrier} using very large spheres touching the cylinder over $\mathcal{D}$ from outside as barriers.  Furthermore because as graphs over their projections $\gamma$ and hence $\gamma_t$ have bounded slope by the Sturmian principle and the asymptotic conical assumption, there is a uniform constant $s> 0$ so that $\tilde\kappa < s \gamma_t'' \cdot \vec{u}$, where $\tilde\kappa$ is the curvature along $P_{xy}(\gamma_t)$ and $\vec{u}$ is the unit normal of it lifted to $\gamma_t$. Indeed, by lemma 3.2 of \cite{QSun_2024} its easy to see we have $(x_s^2 + y_s^2)  \tilde{\kappa} = P_{xy}(\gamma''_t) \cdot \vec{u} = \gamma''_t \cdot \vec{u}$ where $x_s^2 + y_s^2$ on $\gamma_t$ is uniformly positive since $\gamma_t$ has bounded slope over the $xy$-plane, so we see there is an $0< s < \infty$ for which $\tilde\kappa < s \gamma_t'' \cdot \vec{u}$.
    $\medskip$
    
    As a consequence it's easy to see that CSFs in the $xy$-plane with speed scaled by $s$ and laying on the ``outside" of $P_{xy}(\gamma_t)$, that is in the nonconvex domain bounded by $P_{xy}(\gamma_t)$, serve as barriers to the projected flow (so long as there are distance minimizing pairs of points). In particular by using grim reapers of width strictly greater than 1 with speed scaled by $s$ as barriers to $P_{xy}(\gamma_t)$ we see that there is a constant $c = c(s) > 0$ so that $ct + m_0 \leq m_t $. 
     $\medskip$

     \begin{figure}
    \centering
    \includegraphics[width=0.75\linewidth]{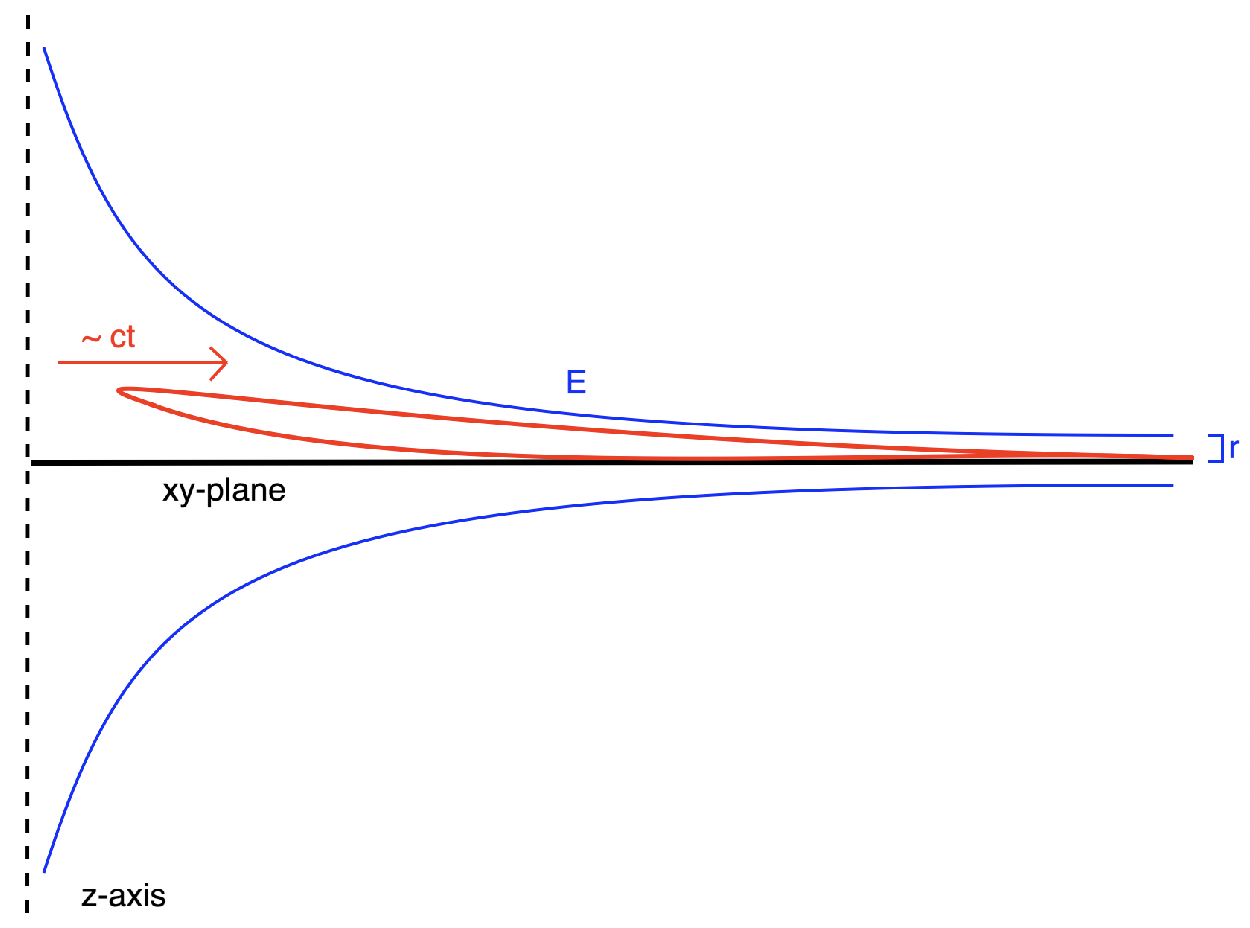}
    \caption{A side view of the barriers in lemma \ref{reaper_in_plane}, where the blue curves represent the translated expanders $E$ and its reflection across the $y$-axis and the red curve denotes $\gamma_t$ (where, from this perspective it would appear to self intersect since $L_1, L_2$ are both in the $xy$-plane). Roughly speaking, the tip of $\gamma_t$ in the figure moves to the right sufficiently faster than the the flow of $E$ separates from the $y$-axis.}
    \label{fig:squeeze}
\end{figure}

On the other hand, we may use a cylinder $\widetilde{E}$ over an appropriately translated expander $E$ asymptotic to the $y$-axis, of which there are many depending on opening angle, and its reflection across the $y$-axis as barriers to ``squeeze" $\gamma_t$ as illustrated in figure \ref{fig:squeeze}. In the following, we first suppose for simplicity that $\gamma_0$ lays completely in the $xy$-plane sufficiently far from the origin. Up to rotation we may $\mathcal{D}$ is parallel to the $y$-axis and we suppose $E$ is an expander in the $yz$-plane with basepoint $(y_0, z_0)$, arranged so that $\widetilde{E}$ is disjoint from $\gamma_0$; it and its cylinder $\widetilde{E}$ will be convex and one can see considering compact approximations that it will serve as a barrier to $\gamma_t$ by lemma \ref{convbarrier}. Then if $E$ is asymptotic to the $y$-axis with rate $r(|(y - y_0, z - z_0)|)$ its flow $E_t$ will be asymptotic to the $y$-axis with rate $\sqrt{2t + 1}r(|(y - y_0, z - z_0)|/\sqrt{2t + 1})$ by the expander property. Note from section 4 of \cite{Ding2020Minimal} that for our expander $E \subset \R^2$ we can arrange that $r(d) = O(e^{-c_0d^2}/d)$ for some $c_0 > 0$, where $d$ is the distance to basepoint. Inserting in the lower bound $ct + m_0 - (|y_0| + |z_0| + \sup |z(\gamma_0(s))|) \leq m_t - (|y_0| + |z_0|+\sup |z(\gamma_0(s))|)$ (the inequality from above shifted by $|y_0| + |z_0| + \sup |z(\gamma_0(s))|$ and crudely a bound for the distance between the ``tip point" $p_t$ and the cylinder/line over the basepoint) we see for points on $\widetilde{E}_t$ laying over $\gamma_t$, in terms of the projection map $P_{xy}$, that $\sqrt{2t + 1}r(|(y - y_0, z - z_0)|/\sqrt{2t + 1}) \sim e^{-c_1t} \to 0$ as $t \to \infty$ for an appropriate $c_1 > 0$, implying that the support of $\Gamma_t$ is contained in the $xy$-plane. In the general case that $\gamma_0$ is only asymptotic to the $xy$-plane we can repeat the argument above using a family of expanders asymptotic to successively thinner slabs about the $xy$-plane farther and farther away from the origin. 
$\medskip$
    
    Lastly, because each of the $\gamma^i_t$ have 1-1 convex projections one can see that the support of $\Gamma_t$ itself bounds a convex set $D_t$ for all $t \geq 0$. 
\end{proof}

Certainly $\Gamma_0$ will be nonempty by the choice of recenterings, but it could be the case the support of $\Gamma_0$ is a ray which immediately disappears under the flow. Our next task then is to show that we may choose a sequence $t_i \to \infty$ so that $D_t$ has nonempty interior.
\begin{lem}\label{nondegen} There exists a sequence of times $t_i \to \infty$ so that the convex sets $D_t$ have nonempty interior for all $t \geq 0$.
\end{lem}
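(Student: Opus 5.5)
We argue by contradiction on the speed of the tip, choosing the times $t_i$ so that the recentered flows see a ``fat'' region near their lowest point. The limit from lemma \ref{reaper_in_plane} is a planar Brakke flow whose support bounds a convex set $D_t\subset\mathcal D$. If $D_t$ has empty interior it is a segment or a ray, and the support of $\Gamma_t$ is that ray covered with multiplicity two. We must rule this out for all $t\ge 0$ along a well-chosen sequence.

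The key quantity is the width of the projected curve. For $h>0$ let $w_t(h)$ be the width of $P_{xy}(\gamma_t)\cap\{d_Q\le m_t+h\}$ in the direction perpendicular to $L_1,L_2$. The first step is to show that a degenerate limit forces $w_{t_i}(h)\to 0$ for every fixed $h$. Convexity of the projections and the containment in $\mathcal D$ show that the region near the tip is squeezed into a thin wedge. The second step is to show that this thinness makes the tip move fast. By the comparison $\tilde\kappa<s\,\gamma_t''\cdot\vec u$ from the proof of lemma \ref{reaper_in_plane}, the projected tip moves at least at $1/s$ times the speed of a planar convex curve. A thin convex cap of width $w$ contains and can be compared against a small circle or a grim reaper of width about $w$. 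So $\frac{d}{dt}m_t\gtrsim 1/(s\,w_t(h))$ in the barrier sense, which is large when $w_t(h)$ is small.

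The third step bounds the average speed of the tip from above. Here I would use an upper barrier. Place a grim reaper in the $xy$-plane of width slightly less than 1, centered in $\mathcal D$, ahead of the curve. Thicken it to the cylinder over it in the $z$-direction, which is convex, and translate it far in the direction of increasing $d_Q$. Because the tip of $\gamma_t$ must stay strictly inside $\mathcal D$ with width bounded below far away, this barrier bounds how far the curve can get. Alternatively, one can use the area decrease: the area between $P_{xy}(\gamma_t)$ and the lines $L_1,L_2$ changes at rate at most about $\int\tilde\kappa\le\pi$ along the projection, weighted by the bounded slope. Either route gives $m_t\le Ct+C'$. Combining this with the second step, $w_t(h)$ cannot tend to zero along all times; there is a constant $\delta>0$ and a sequence $t_i\to\infty$ with $w_{t_i}(h_0)\ge\delta$. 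Choose the recentering times to be such $t_i$, or times slightly after them, using monotonicity under comparison. Then the limit $D_0$ contains a set of width $\ge\delta$ and so has nonempty interior.

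Finally I would propagate nonemptiness of the interior to all $t\ge0$. Since $d_Q$ of the minimum only increases and the projected flows are convex, $D_t$ is the limit of convex regions. A small disk inside $D_0$, of radius comparable to $\delta$, can be moved by the outer/inner planar comparison, with the speed factor $s$. A cleaner argument is to choose the $t_i$ such that the width lower bound holds on a whole interval $[t_i,t_i+T_i]$ with $T_i\to\infty$. This is possible because the set of bad times (width $<\delta$) must have bounded density by the speed argument, so good intervals of arbitrary length exist after passing to $\delta/2$. I expect the main obstacle to be this third step: making the upper speed bound rigorous in high codimension without curvature control. Here I would rely on the convex cylinder barriers of lemma \ref{convbarrier}, together with the planar comparison inequality $\tilde\kappa<s\,\gamma''\cdot\vec u$. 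The averaging that turns ``small width forces high speed'' into ``width is bounded below on long time intervals'' also needs care.
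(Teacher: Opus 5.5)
Your core argument matches the paper's proof. Thinness near the tip forces the tip to move fast, via narrow outer grim reapers moving with speed scaled by the factor $s$ from $\tilde\kappa<s\,\gamma_t''\cdot\vec u$. For that lower bound the thin cap must be \emph{enclosed} by the narrow grim reaper; a small circle contained in the cap would only bound the tip's motion from above. Meanwhile the cylinder over a grim reaper of width $w<1$ placed ahead of the curve is an inner barrier by Lemma \ref{convbarrier}, so the tip cannot stay fast forever. This correctly produces $\delta>0$ and $t_i\to\infty$ with $D_0$ nondegenerate.

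The gap is the last step, extending nondegeneracy to all $t\ge 0$. A small disk inside $D_0$ is an inner barrier only until it shrinks away, i.e.\ for $t\lesssim\delta^2$, so it says nothing about large $t$. Your density route also fails. Bad times do have small density, since the tip moves at speed $\gtrsim 1/(s\delta)$ during them and the total displacement is at most $Ct+C'$. But small density does not give long intervals of good times: bad times could recur every unit of time with small total measure. You also have no uniform-in-time continuity of $w_t(h)$ that would let you pass to $\delta/2$. More fundamentally, $w_t(h)$ is measured relative to the moving tip $p_t$, whereas $D_t$ is the limit in the frame centered at $p_{t_i}$. Without a bound on $m_{t_i+t}-m_{t_i}$ uniform in $i$, the fat part could drift off to infinity and $D_t$ could even be empty, which is exactly the scenario the lemma must exclude.

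The missing idea is a convexity observation that makes the propagation immediate. Each $D^i_0$ is a closed convex set containing points that go to infinity in the slab direction $e$, so $e$ lies in its recession cone. Hence a transverse chord of length $\ge\delta$ within distance $100$ of the tip sweeps out a half-strip of width $\delta$ inside $D^i_0$, uniformly in $i$. Place a grim reaper of width $w'<\delta$, or the cylinder over it, inside this half-strip at bounded distance from the origin. It is an inner barrier for all later times, either by Lemma \ref{convbarrier} or because the projected curve moves inward with normal speed $\gamma_t''\cdot\vec u\le\tilde\kappa$. So $D^i_t$ contains the grim reaper region translated by $\pi t/w'$ for every $t\ge 0$. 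This persists in the limit and shows $D_t$ is nonempty with nonempty interior for all $t\ge0$. It is the paper's one-line ``appropriately sized grim reaper as an inner barrier'' step, and it also supplies the bound on tip displacement your density argument lacked.
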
 
\begin{proof}  Recall that we supposed for the sake of concreteness the rays $L_1$ and $L_2$ are distance one apart at the start of this section. First we show there is an appropriate sequence $t_i \to \infty$ so that the conclusion holds for $t = 0$. If $D_0$ has empty interior then it must be a halfline by convexity, which since the projected curves are convex we see that for any $\epsilon > 0$ one has $P_{xy}(\gamma^i_0) \cap B(0,100)$ is contained in a slab of width $\epsilon$ for $i$ sufficiently large. Suppose for the sake of contradiction that this is the case for any sequence $t_i \to \infty$ which, for a fixed choice of $\epsilon$, implies there is a time $T(\epsilon) \gg 0$ so that $P_{xy}(\gamma_t - p_t) \cap B(0,100)$ is contained in a slab of width $\epsilon$, where $p_t$ is a distance minimizer to the reference plane $Q$ from above on $\gamma$ at time $t$. 
$\medskip$

To continue, we recall from the previous argument that there is a constant $s> 0$ so that $\tilde\kappa < s \gamma_t'' \cdot \vec{u}$, where $\tilde\kappa$ is the curvature along $P_{xy}(\gamma_t)$ and $\vec{u}$ is the unit normal on $P_{xy}(\gamma_t)$ lifted to $\gamma_t$. For a given $0 < \epsilon \ll1$ we use, similar to the previous proof, grim reapers of width $\sqrt{\epsilon}$ with speed scaled by $s$ as barriers for $\gamma_t$ for $t > T$ within $B(p_t, 100)$ -- of course these grim reapers should intersect $\gamma_t$ but from the previous reductions they must do so outside the ball $B(p_t, 100)$. For $\epsilon$ sufficiently small and a given time $t_0 > T$ one can arrange the distance between the tips of these grim reapers and $P_{xy}(p_i)$ is less than $\sqrt{\epsilon}$, which implies there is some constant $C > 0$ so that $\frac{dm}{dt}(t_i) > C/\sqrt{\epsilon}$ for $t > T$. By taking $T$ larger/$\epsilon$ smaller, this can be arranged to be as large as we wish; see figure \ref{fig:enterprise}.
$\medskip$

On the other hand, denote by $\mathcal{G}$ a grim reaper asymptotic in the slab $\mathcal{D}$ bounded by $L_1$ and $L_2$ of width $1/2 < w < 1$. By appropriately translating it, $\mathcal{G}$ can be arranged to be disjoint from $\gamma_0$. Noting that for a fixed $t > T$ $\gamma_t$ is asymptotic to $L_1$ and $L_2$ we obtain a contradiction via lemma \ref{convbarrier} using cylinders over $\mathcal{G}$ (more precisely, compact convex approximations of them) for $\epsilon$ sufficiently small because these grim reaper cylinders translate at a speed bounded by $1/2\pi$. So, it must be the case that there is some $\epsilon' > 0$ and some sequence of times $t_i \to \infty$ for which $P_{xy}(\gamma^i_0) \cap B(0,100)$ is \textit{not} contained in a slab of width $\epsilon'$, giving the claim for $t = 0$. By the convexity assumption using an appropriately sized grim reaper as an inner barrier one can see that $D_t$ is nondegenerate for all $t > 0$ as well. \end{proof}

\begin{figure}
    \centering
    \includegraphics[width=0.7\linewidth]{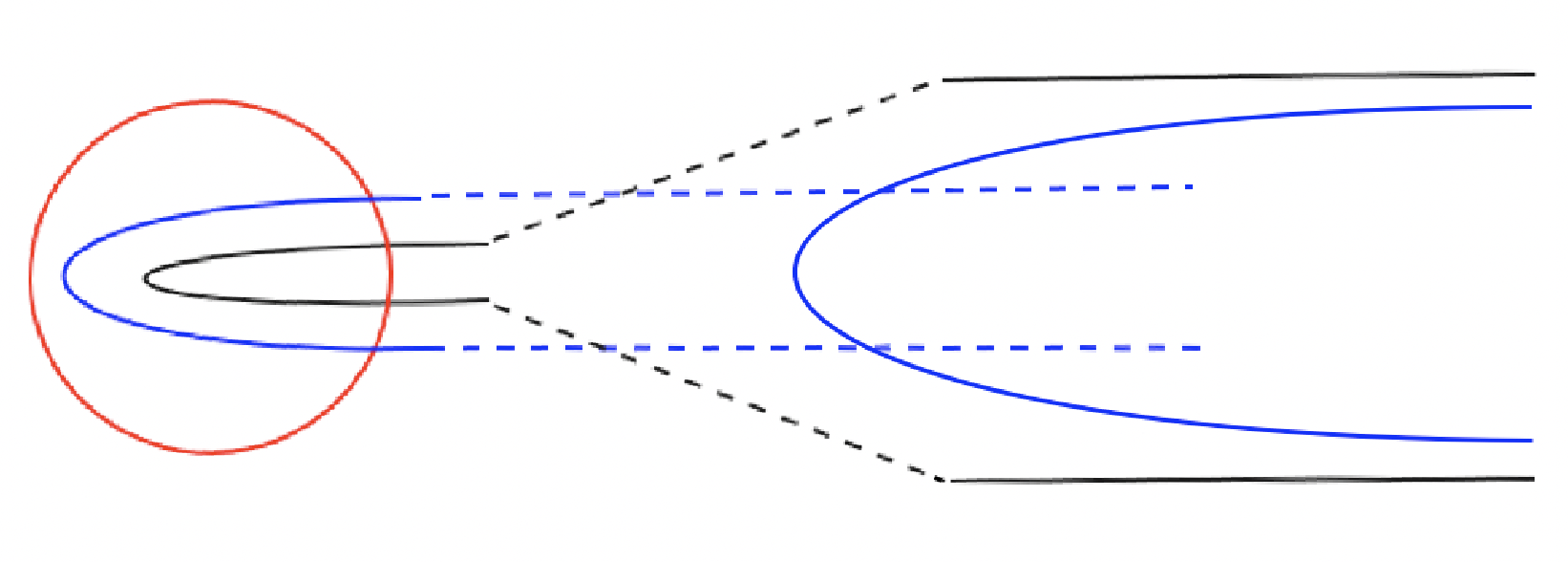}
    \caption{A schematic of the barriers in lemma \ref{nondegen}, where the black curve represents $P_{xy}(\gamma)$ and the blue curves the various grim reapers with one centered over the ball $B(p_t, 100)$ outlined in red. The smaller grim reaper, along with the convexity of $P_{xy}(\gamma)$, insure that that $\gamma_t$ will "translate" very quickly, which is at odds with the slower moving larger grim reaper to the right. Note that proportionally the ``tip" of $P_{xy}(\gamma)$ will be much thinner than the outer reaper barrier for $\epsilon \ll 1$.}
    \label{fig:enterprise}
\end{figure}

We next claim regularity of this limit. In the following, we denote by $\mathcal{S}$ the singular set of $\Gamma_t, t > 0$:
\begin{lem} For $t > 0$ $\Gamma_t$ is smooth, or in other words $\mathcal{S} = \emptyset$ and consequently the convergence $\gamma_t^i \to \Gamma_t$ is smooth after potentially passing to a subsequence.  
\end{lem}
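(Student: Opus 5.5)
The plan is to show that at every point of the support of $\Gamma_t$, $t>0$, the Gaussian density of the limiting Brakke flow equals one. Brakke's regularity theorem then gives smoothness, and the smooth convergence of $\gamma^i_t$ follows from the same regularity theorem applied to the approximating flows, passing to a subsequence.

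First, lemma \ref{reaper_in_plane} shows that $\Gamma_t$ is a planar Brakke flow supported on $\partial D_t$, and lemma \ref{nondegen} shows that $D_t\subset\mathcal D$ is a convex set with nonempty interior. A closed convex set with nonempty interior in a slab has boundary that is locally a Lipschitz graph. So I would first show that $\mathrm{supp}\,\Gamma_t=\partial D_t$ (no extra pieces or isolated points) and that the multiplicity is one almost everywhere.

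For the multiplicity, I would use the one-to-one projection property. Each $\gamma^i_t$ projects injectively onto the convex curve $P_{xy}(\gamma^i_t)$, and by corollary \ref{stayconvex+ramp} the slope over the $xy$-plane is uniformly bounded. Hence the length of $\gamma^i_t$ in a ball is comparable to that of its projection, with ratio tending to one since the $z$-coordinates flatten out (lemma \ref{reaper_in_plane}). Lengths of convex curves converge under Hausdorff convergence of convex sets, and $P_{xy}(\gamma^i_t)$ Hausdorff-converges locally to $\partial D_t$. I therefore expect the limit measures to be exactly $\mathcal H^1\llcorner\partial D_t$.

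This step is the main obstacle. Two things must be controlled. The first is that pieces of $\gamma^i_t$ with large slope or oscillating $z$ carry no extra mass in the limit; here the bounded slope must be improved, using the $z$-squeezing barriers, to show that the $z$-oscillation contributes $o(1)$ length. The second is that the measure convergence coming from Brakke compactness matches the convex-set convergence for every $t$, not just almost every $t$. I would first try to handle both by comparing the densities directly: compare Gaussian density ratios of $\gamma^i_t$ with those of its projection, using the slope bound.

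Once $\Gamma_t=\mathcal H^1\llcorner\partial D_t$ with $\partial D_t$ a convex Lipschitz curve, every tangent cone is either a line (density one) or a wedge of angle less than $\pi$, and the wedge case occurs only at corners. A convex corner cannot persist for positive time. Parabolic blow-ups at a point with $t>0$ are self-similar planar Brakke flows, and their time slices are still boundaries of convex sets by the same argument. The only such shrinkers with density below two bounding a convex region are the line and the shrinking circle. The circle is excluded because $D_t$ contains a grim reaper inner barrier and is unbounded (lemma \ref{nondegen}). Thus every tangent flow at $t>0$ is a multiplicity-one line, so the Gaussian density is one and Brakke regularity gives $\mathcal S=\emptyset$. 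Finally, local smooth convergence with uniform curvature bounds follows from Brakke/White local regularity applied to the $\gamma^i_t$, whose density ratios converge to those of $\Gamma_t$, after passing to a subsequence.
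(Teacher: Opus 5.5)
Your overall strategy of classifying tangent flows through convexity is the same as the paper's. There is a genuine gap, though, at the step where you assume the tangent flows at $(p,t_0)$, $t_0>0$, have density below two: nothing you establish gives this. Multiplicity one of $\Gamma_{t_0}$ does not control the parabolic density, and neither do the static tangent cones of $\partial D_{t_0}$ (lines or wedges, all of Gaussian density one). The parabolic density is computed from the earlier slices $\Gamma_{t_0-r^2}$. If, at scale $r$ around $p$, $D_{t_0-r^2}$ looks like a strip of width $o(r)$, the blow-up is a multiplicity-two line of density exactly two, even though every slice of $\Gamma$ has multiplicity one. That shrinker does bound a convex region, only a degenerate one, so your classification does not exclude it.

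This is exactly the case the paper's proof is about. A singular tangent flow, being a shrinker with convex noncompact support, must be a multiplicity-two line, and the paper rules this out using the convexity of $D_t$ and the nondegeneracy from lemma \ref{nondegen}. You use lemma \ref{nondegen} only to exclude the circle. One way to supply the missing step is as follows. As in the proof of lemma \ref{reaper_in_plane}, the projected curves have normal speed $\gamma_t''\cdot\vec u=(x_s^2+y_s^2)\tilde\kappa\ge 0$, so $D_t$ is nonincreasing in $t$. Hence for $p\in\partial D_{t_0}$ the rescaled sets $r^{-1}(D_{t_0-r^2}-p)$ contain $r^{-1}(D_{t_0}-p)$. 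These increase, as $r\to0$, to the tangent cone of $D_{t_0}$ at $p$, which is an unbounded wedge with nonempty interior. So the blown-up convex sets can neither collapse onto a line nor be a disk, leaving only the multiplicity-one line.

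The step you flag yourself, $\Gamma_t=\mathcal H^1\llcorner\partial D_t$, is also left unproved, and the route you suggest would not work as stated. The squeezing barriers of lemma \ref{reaper_in_plane} only give $C^0$ smallness of the vertical coordinates, which does not improve slope bounds. What does work is integrality. For a.e. $t$ the limit is an integral varifold supported in the plane, so along a subsequence $\int_{\gamma^i_t}\psi\,|T-P_{xy}T|^2\,ds\to 0$. Since $1\le |P_{xy}T|+|T-P_{xy}T|^2$ and the projection is one-to-one, this gives $\int_{\gamma^i_t}\psi\,ds\le\int_{P_{xy}(\gamma^i_t)}\psi\,ds+o(1)$ for cutoffs $\psi$ depending only on $(x,y)$. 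The reverse inequality is trivial. Lengths of boundaries of convex sets converge when the limit set has nonempty interior, so the multiplicity is one. Nondegeneracy is essential here too, since boundaries of convex sets collapsing onto a line converge with multiplicity two.
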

\begin{proof}
Suppose for the sake of contradiction there is a singular point $p_s$ encountered along the flow. By the local area bounds we may consider the tangent flow at a singular point $p_s$ obtaining a nontrivial self shrinker $\Sigma$. Note that since $\Gamma_t$ is an integral Brakke flow, the density of every point in the support of $\Sigma$ is at least one. This self shrinker asymptotically converges to a cone $C \subset \mathbb{R}^2$, at least in a weak sense (see proposition 2.2 in \cite{Lu}) which in this case is asymptotic to a (potentially empty) collection of rays possibly with multiplicity; since the limit shrinker $\Sigma$ must also have convex and noncompact support its not hard to see that $\Sigma$ is in fact precisely a line of multiplicity 2. From this we gain a contradiction by the convexity of the whole set $D_t$ and it's nondegeneracy from lemma \ref{nondegen} implying our claim of regularity. To see that the convergence of $\gamma_t^i \to \Gamma_t$ is smooth we note that because $D_t$ is nondegenerate for each $t$ and the space of integral varifolds is closed that the multiplicity of convergence must be one, so the Brakke regularity theorem implies that along the sequence we have local curvature bounds with which we may extract a smoothly converging subsequence ala Arzela-Ascoli. 
\end{proof}

Thus $\Gamma_t$ is a smooth convex flow. For each $t >0$ $\Gamma_t$ is complete, nonflat, and trapped in a slab, so must be asymptotic to two distinct parallel rays and consequently will flow to a grim reaper by \cite{Polden1991EvolvingCurves} or \cite{choi2021convergence}, implying item 2(c) in theorem \ref{mainthm}.

\begin{rmk}\label{widthrmk}
    As pointed out in the introduction, note that we don't show here the asymptotic grim reaper is itself asymptotic to $L_1$ and $L_2$. It seems one should be able to show that the constant $s$ from lemmas \ref{reaper_in_plane} and \ref{nondegen} should tend to $1$ along $\gamma_t$ as $t \to \infty$, so that for $t$ large $P_{xy}(\gamma_t)$ is approximately a curve shortening flow in the plane. Supposing this is the case, speculatively perhaps one can refine the arguments above or extend the arguments of \cite{choi2021convergence} to such flows to see they remain asymptotic to $L_1$ and $L_2$ in the limit. \end{rmk}

\bibliographystyle{plain}
\bibliography{main}

\begin{thebibliography}{10}

\bibitem{Altschuler1991SingularitiesOT}
Steven~J. Altschuler.
\newblock Singularities of the curve shrinking flow for space curves.
\newblock {\em Journal of Differential Geometry}, 34:491--514, 1991.

\bibitem{AltschulerGrayson}
Steven~J. Altschuler and Matthew~A. Grayson.
\newblock {Shortening space curves and flow through singularities}.
\newblock {\em Journal of Differential Geometry}, 35(2):283 -- 298, 1992.

\bibitem{Andrews2010Mean}
Ben Andrews and Charles Baker.
\newblock Mean curvature flow of pinched submanifolds to spheres.
\newblock {\em Journal of Differential Geometry}, 85(3):357--395, 2010.

\bibitem{angenent1988zero}
Sigurd Angenent.
\newblock The zero set of a solution of a parabolic equation.
\newblock {\em Journal f{\"u}r die reine und angewandte Mathematik},
  390:79--96, 1988.

\bibitem{BourniLangfordLynch+2023+273+305}
Theodora Bourni, Mat Langford, and Stephen Lynch.
\newblock Collapsing and noncollapsing in convex ancient mean curvature flow.
\newblock {\em Journal für die reine und angewandte Mathematik (Crelles
  Journal)}, 2023(801):273--305, 2023.

\bibitem{blt_pancakes}
Theodora Bourni, Mat Langford, and Giuseppe Tinaglia.
\newblock Collapsing ancient solutions of mean curvature flow.
\newblock {\em J. Differential Geom.}, 119(2):187--219, 2021.

\bibitem{ChenYin2007_MCF_pseudolocality}
Bing-Long Chen and Le~Yin.
\newblock Uniqueness and pseudolocality theorems of the mean curvature flow.
\newblock {\em Communications in Analysis and Geometry}, 15(3):435--490, 2007.

\bibitem{choi2021convergence}
Beomjun Choi, Kyeongsu Choi, and Panagiota Daskalopoulos.
\newblock Convergence of curve shortening flow to translating soliton.
\newblock {\em American Journal of Mathematics}, 143(4):1043--1077, 2021.

\bibitem{ColdingMinicozzi2012}
Tobias~H. Colding and William P.~Minicozzi II.
\newblock Generic mean curvature flow i; generic singularities.
\newblock {\em Annals of Mathematics}, 175(2):755--833, 2012.

\bibitem{Colding2020Complexity}
Tobias~H. Colding and William P.~Minicozzi II.
\newblock Complexity of parabolic systems.
\newblock {\em Publications math{\'e}matiques de l'IH{\'E}S}, 132:83--135,
  2020.

\bibitem{Ding2020Minimal}
Qi~Ding.
\newblock Minimal cones and self-expanding solutions for mean curvature flows.
\newblock {\em Mathematische Annalen}, 376(1--2):359--405, 2020.

\bibitem{ecker1989mean}
Klaus Ecker and Gerhard Huisken.
\newblock Mean curvature evolution of entire graphs.
\newblock {\em Annals of Mathematics}, 130(3):453--471, 1989.

\bibitem{Grayson1987}
Matthew~A. Grayson.
\newblock The heat equation shrinks embedded plane curves to round points.
\newblock {\em Journal of Differential Geometry}, 26(2):285--314, 1987.

\bibitem{Httenschweiler2015CurveSF}
J{\"o}rg H{\"a}ttenschweiler.
\newblock Curve shortening flow in higher dimension.
\newblock 2015.

\bibitem{ladyzhenskaia1968linear}
Olga~Aleksandrovna Ladyzhenskaia, Vsevolod~Alekseevich Solonnikov, and Nina~N
  Ural'tseva.
\newblock {\em Linear and quasi-linear equations of parabolic type}, volume~23.
\newblock American Mathematical Soc., 1968.

\bibitem{litzinger2023singularities}
Florian Litzinger.
\newblock Singularities of low entropy high codimension curve shortening flow.
\newblock Preprint, arXiv:2304.02487.

\bibitem{lynch2020highcodimensionmeancurvature}
Stephen Lynch and Huy~The Nguyen.
\newblock High codimension mean curvature flow with surgery.
\newblock Preprint, arXiv:2004.07163.

\bibitem{Minarcik2020LongTerm}
Ji\v{r}\'{i} Minar\v{c}\'{i}k and Michal Bene\v{s}.
\newblock Long-term behavior of curve shortening flow in $\mathbb{R}^3$.
\newblock {\em SIAM Journal on Mathematical Analysis}, 52(2):1221--1231, 2020.

\bibitem{MP_flowtoroundpoint}
Alexander Mramor and Alexander Payne.
\newblock Nonconvex surfaces which flow to round points.
\newblock {\em Communications in Analysis and Geometry}, 32(3):837--887, 2024.

\bibitem{Naff2022}
Keaton Naff.
\newblock A planarity estimate for pinched solutions of mean curvature flow.
\newblock {\em Duke Math. J.}, 171(2):443--482, 2022.

\bibitem{naff2022singularity}
Keaton Naff.
\newblock Singularity models of pinched solutions of mean curvature flow in
  higher codimension.
\newblock {\em Journal f{\"u}r die reine und angewandte Mathematik (Crelles
  Journal)}, 2022(790):1--45, 2022.

\bibitem{nguyen2026highcodimensioncurveshortening}
Huy~The Nguyen and Artemis Vogiatzi.
\newblock High codimension curve shortening flow with free boundary.
\newblock Preprint, arXiv:2602.20865.

\bibitem{vogiatzi2023singularity}
Huy~The Nguyen and Artemis Vogiatzi.
\newblock Singularity models for high codimension mean curvature flow in
  riemannian manifolds.
\newblock Preprint, arXiv:2303.00414.

\bibitem{Polden1991EvolvingCurves}
Alexander Polden.
\newblock Evolving curves.
\newblock 1991.

\bibitem{Smoczyk2004_longtime}
Knut Smoczyk.
\newblock Longtime existence of the lagrangian mean curvature flow.
\newblock {\em Calculus of Variations and Partial Differential Equations},
  20(1):25--46, 2004.

\bibitem{stavrou1998selfsimilar}
Nikos Stavrou.
\newblock Selfsimilar solutions to the mean curvature flow.
\newblock {\em Journal f{\"u}r die reine und angewandte Mathematik},
  499:189--198, 1998.

\bibitem{QSun_2024}
Qi~Sun.
\newblock Curve shortening flow of space curves with convex projections.
\newblock Preprint, arXiv:2410.08399.

\bibitem{QSun_2025}
Qi~Sun.
\newblock Singularities of curve shortening flow with convex projections.
\newblock Preprint, arXiv:2510.14863.

\bibitem{Ton}
Yoshihiro Tonegawa.
\newblock Brakke's mean curvature flow: An introduction.
\newblock Springer Singapore, 2019.

\bibitem{Lu}
Lu~Wang.
\newblock Asymptotic structure of self-shrinkers.
\newblock Preprint, arXiv:1610.04904, 2016.

\bibitem{Wang2001Mean}
Mu-Tao Wang.
\newblock Mean curvature flow of surfaces in {Einstein} four-manifolds.
\newblock {\em Journal of Differential Geometry}, 57(2):301--338, 2001.

\bibitem{Wang2002_GraphicMCF}
Mu-Tao Wang.
\newblock Long-time existence and convergence of graphic mean curvature flow in
  arbitrary codimension.
\newblock {\em Inventiones Mathematicae}, 148(3):525--543, 2002.

\bibitem{Wang2004MCF}
Mu-Tao Wang.
\newblock The mean curvature flow smoothes lipschitz submanifolds.
\newblock {\em Communications in Analysis and Geometry}, 12(3):581--599, 2004.

\end{thebibliography}
\end{document}